\newcommand{\ignore}[1]{}
\newcommand{\bb}{\mathbb}
\newcommand{\C}{\bb C} 
\newcommand{\Z}{\bb Z}
\newcommand{\R}{\bb R}
\newcommand{\N}{\bb N}
\newcommand{\Q}{\mathbb Q}
\newcommand{\cH}{\mathcal{H}}
\newcommand\norm[1]{\left\|#1\right\|}
\newcommand\abs[1]{\left|#1\right|}
\newtheorem{Theorem}{Theorem}
\newtheorem{Cor}[Theorem]{Corollary}
\newtheorem{question}[Theorem]{Question}
\newtheorem{Prop}[Theorem]{Proposition}
\newtheorem{Lemma}[Theorem]{Lemma}
\newtheorem*{lemma*}{Lemma}
\newtheorem*{theorem*}{Theorem}
\numberwithin{equation}{section}
\numberwithin{Theorem}{section}
\begin{document}
\title[Optimal density for polynomial maps]{Optimal density for values \\ of generic polynomial maps}
\author{Anish Ghosh, Alexander Gorodnik, and Amos Nevo} 
\address{School of Mathematics, Tata Institute of Fundamental research, Mumbai, India }
\email{ghosh@math.tifr.res.in}
\address{School of Mathematics, University of Bristol, Bristol UK }
\email{a.gorodnik@bristol.ac.uk}
\address{Department of Mathematics, Technion IIT, Israel}
\email{anevo@tx.technion.ac.il}

\thanks{The first author acknowledges support of UGC. The third author acknowledges support of ISF}

\begin{abstract}
We establish that the optimal bound for the size of the smallest integral solution of the Oppenheim Diophantine approximation problem 
$\abs{Q(x)-\xi}< \epsilon$ for a generic ternary form $Q$ is $\abs{x}\ll \epsilon^{-1}$. We also establish an optimal rate of density for the values of polynomials maps in a number of other natural problems, including the values of linear forms restricted to suitable quadratic surfaces, and the values of the polynomial map defined by the generators of the ring of conjugation-invariant polynomials on $M_3(\C)$. 

These results are instances of a general approach that we develop, which considers a rational affine algebraic subvariety of Euclidean space, invariant and homogeneous under an action of a semisimple Lie group $G$. Given a polynomial map $F$ defined on the Euclidean space which is invariant under a semisimple subgroup $H$ of the acting group $G$, consider the family of its translates $F\circ g$ by elements of the group. We study the restriction of these polynomial functions to the integer points on the variety confined to a large Euclidean ball. Our main results establish an explicit rate of density for their values, for generic polynomials in the family. This problem has been extensively studied before when the polynomials in question are linear, in the context of classical Diophantine approximation, but very little was known about it for polynomial of higher degree.  We formulate a heuristic pigeonhole lower bound for the density and an explicit upper bound for it, formulate a sufficient condition for the coincidence of the lower and upper bounds, and in a number of natural examples establish that they indeed match.
  Finally, we also establish a rate of density for values of homogeneous polynomials on homogeneous projective varieties. 

\end{abstract}

\maketitle
 
 \tableofcontents
 
\section{Introduction and statement of main results}\label{sec:intro}
\subsection{Introduction}
The present paper is devoted to establishing effective rates of density for values of generic polynomial maps computed at integral points. While the case of linear maps is a subject of the classical theory of Diophantine approximation, very little is known about polynomial maps of higher degrees. The present paper makes a systematic advance in this direction, including the derivation of some optimal results, which have few, if any, precedents.

One of the simplest (non-linear) instances of the deterministic form of this question 
is the quantitative density of values $Q(x)$, $x\in\mathbb{Z}^n$, 
for irrational indefinite quadratic forms
$Q$. Given $\xi\in \mathbb{R}$ and $\varepsilon>0$, one would like to establish
existence of $x\in\mathbb{Z}^n$ satisfying
$|Q(x)-\xi|<\varepsilon$ with an \emph{explicit} bound on the size of the vector $x$. This question was studied in \cite{BD,GM,LM} (we refer
to \S\ref{sec:quad0} below for a more elaborate discussion).
An analogous question can be asked for other polynomial maps.
For example, the density of values of linear maps on rational quadratic surfaces
was studied in \cite{S1} (see \S\ref{sec:lin} below).

Let us formulate this problem more generally.
Let ${\sf X}\subset \mathbb{A}^n$ be an affine algebraic variety defined over $\mathbb{Q}$ such that the set of its integral points ${\sf X}(\mathbb{Z})$ is Zariski dense in ${\sf X}$, and let $F=(F_1,\ldots,F_m):{\sf X}\to \mathbb{A}^m$ be a polynomial map defined over $\mathbb{R}$ such that
$F({\sf X}(\mathbb{R}))=\mathbb{R}^m$.
Provided that $F$ satisfies some irrationality assumptions, one might hope 
to show that the set of values
$F(x)$, $x\in {\sf X}(\mathbb{Z})$, is dense in $\mathbb{R}^m$.
We would like to explore quantitative aspects of the density of this set.
To state our question precisely, we fix an exponent $\kappa>0$,
and for $\xi\in \mathbb{R}^m$ and $\varepsilon>0$, we consider the system of inequalities
\begin{equation}\label{eq:main_ineq}
\|F(x)-\xi\|<\varepsilon,\quad \|x\|<\varepsilon^{-\kappa}\quad \hbox{ with $x\in {\sf X}(\mathbb{Z})$}, 
\end{equation}
where $\|\cdot \|$ denotes the maximum norm.
Analytic methods, which allow to establish
density  of values, usually can be also used to show the 
existence of $\kappa>0$ such that this system of inequalities
has solutions, but typically such $\kappa$ is far from optimal.
Naturally, the system \eqref{eq:main_ineq} cannot be solved for all $\xi$ if the exponent $\kappa$ is too small.
The lower bound $\kappa$ is expected to obey can be gleaned from the following heuristic argument. 

\noindent {\bf Pigeonhole Heuristics.} Suppose that  the polynomial map $F$ has degree $d$, and for some $a>0$ the following growth bound 
$$
|\{x\in {\sf X}(\mathbb{Z}):\,\, \|x\|<T\}|\ll T^a
$$
is satisfied for all sufficiently large $T$.
Since the values $F(x)$ with $\|x\|<T$ lie in a box 
of size $O(T^{d})$, it is natural to expect that a fixed bounded set contains at most 
$O(T^{a-md})$ of these values.
Then it follows from the pigeonhole principle that the set of values could be dense at most on the scale $O(T^{-(a-md)/m})$. Hence, in order to have solutions in
\eqref{eq:main_ineq} for all $\xi \in \mathbb{R}^m$ and all sufficiently small $\varepsilon$,
the exponent $\kappa$ must satisfy
$\kappa\ge m/(a-md)$. This naturally leads to the following 

\begin{question}\label{q:q}
Let $a=\liminf_{T\to\infty}\frac{\log |\{x\in {\sf X}(\mathbb{Z}):\,\, \|x\|<T\}|}{\log T}$, let 
$F:{\sf X}\to \mathbb{A}^m$ be a polynomial map of degree $d<a/m$,
and $\kappa> m/(a-md)$.
Given $\xi\in \R^m$, 
in which cases does the system \eqref{eq:main_ineq}
have solutions for all sufficiently small $\varepsilon$?
\end{question}	

An important source of motivation for the results we formulate below is the classical case of linear maps $F:\mathbb{R}^n \to \mathbb{R}^m$.  In this case, for generic (namely almost all) maps $F$ it is in fact possible to obtain the optimal rate. 
Indeed, it follows from the theory of inhomogeneous Diophantine approximation
that when $\kappa>m/(n-m)$, for all $\xi\in\mathbb{R}^m$ and almost all linear maps $F:\mathbb{R}^n \to \mathbb{R}^m$,
the inequality
$$
\|F(x)-\xi\|<\|x\|^{-1/\kappa}
$$
has infinitely many solutions $x\in\mathbb{Z}^n$.
This result matches the a-priori lower bound provided by the Pigeonhole Heuristics.

It is a natural but much more difficult challenge to produce non-linear examples
where the approximation property holds for all exponents $\kappa> m/(a-md)$, and such a result 
can be expected to give the {\it optimal density rate} for the distribution of the polynomial values in question. 
Our paper provides a series of examples exhibiting this property, including :
\begin{itemize}
\item quadratic forms,
\item linear maps on quadratic surfaces,
\item characteristic polynomial maps,
\item Gram-matrix maps.
\end{itemize}
In all these instances, we show that for generic maps of this form,
Question \ref{q:q} is answered positively.
A common property of these examples, generalizing the case of linear and quadratic maps mentioned already, is that the variety ${\sf X}$ is invariant under
an action of a linear algebraic group ${\sf G}\subset \hbox{GL}_n$.
This allows us to consider a family of maps 
$$
F_g:=F\circ g^{-1}:{\sf X}\to \mathbb{A}^m,\quad g\in {\sf G}(\mathbb{R}).
$$
We shall prove that given an arbitrary $\xi\in \mathbb{R}^m$,
for almost all $g\in {\sf G}(\mathbb{R})$ the values $F_g(x)$, $x\in {\sf X}(\Z)$, 
give a solution to \eqref{eq:main_ineq}
provided that $\kappa> m/(a-md)$ and $\varepsilon$ is sufficiently small.

Let us note that while the lower bound on $\kappa$ in Question \ref{q:q} is expressed in terms of geometric
and arithmetic data (namely dimension, degree, and the growth rate of the number of integral points), our proof relies on analytic estimates of certain averaging operators which ultimately depends on spectral bounds for automorphic representations, and also on the asymptotics of volume growth for balls in algebraic groups. 
It is a remarkable fact that all these parameters match 
precisely and give exactly the same exponent as predicted by the Pigeonhole Heuristics.

\subsection{Quadratic forms}\label{sec:quad0}
Let $Q(x)=\sum_{i,j=1}^n a_{ij}x_ix_j$ be a nondegenerate indefinite quadratic form in $n$ variables.
The density of values in this case has been extensively studied by both
number-theoretic and ergodic-theoretic methods. 
It was conjectured by Oppenheim in the 1920's that if $n\ge 5$ and 
$Q$ is irrational (that is, not proportional to a form with rational coefficients),
then the set of values $Q(x)$, $x\in \mathbb{Z}^n$, is dense in $\R$.
This problem has a long history that we will not attempt to discuss here in detail
and we refer instead to \cite{mar_s} for a comprehensive survey. 
Originally, it was approached by Fourier-analytic techniques
such as the Hardy--Littlewood Circle Method and its variants.
For instance, Davenport and Heilbronn \cite{DH} proved the Oppenheim conjecture
for non-degenerate indefinite diagonal quadratic forms $Q$ in $n\ge 5$ variables,
and Birch and Davenport \cite{BD} showed that in this setting for
$\kappa>2$, the system of inequalities
\begin{equation}
\label{eq:qq}
|Q(x)|<\varepsilon,\quad \|x\|<\varepsilon^{-\kappa}\quad \hbox{ with $x\in \mathbb{Z}^n\backslash \{0\}$}
\end{equation}
is solvable for all sufficiently small $\varepsilon>0$. This still seems to be the best known bound for diagonal forms, while based on the Pigeonhole Heuristics one might hope to have existence of solutions when $\kappa>1/(n-2)$ for non-degenerate forms $Q$ in $n$ variables.
In full generality, the Oppenheim conjecture was proved by Margulis \cite{M2,M1}, where it is 
shown that $Q(\mathbb{Z}^n)$ is dense in $\mathbb{R}$ for all non-degenerate indefinite irrational quadratic forms in $n\ge 3$ variables.
Margulis'  original proof was motivated by Ragunathan's observation that the Oppenheim conjecture would follow from a result regarding orbit closures of certain subgroups of the orthogonal group of $Q$
acting on the space of unimodular lattices.
This approach is topological in nature 
so that it was not clear originally how to derive any quantitative density estimates.
Subsequently, more refined analytic techniques for addressing this problem
have been developed by Bentkus and G\"otze \cite{BG} and G\"otze and Margulis \cite{GM}.
In particular, it was established in \cite{GM} that for any $\kappa>12$,
the system \eqref{eq:qq} has a solution for general non-degenerate indefinite irrational
quadratic forms in $n\ge 5$ variables. Moreover, 
it was shown in \cite{GM} that if $Q$ additionally satisfies
an explicit Diophantine condition, then for $\kappa>\kappa_0(Q)>1$,
the system
\begin{equation}
\label{eq:qq2}
|Q(x)-\xi|<\varepsilon,\quad \|x\|<\varepsilon^{-\kappa}\quad \hbox{ with $x\in \mathbb{Z}^n\backslash \{0\}$}
\end{equation}
is solvable for all $\xi\in \mathbb{R}$ and all sufficiently small $\varepsilon>0$.
It seems that the condition $n\ge 5$ is a natural barrier for applicability of the methods
of \cite{BG,GM}. The case of ternary quadratic forms $Q$ was investigated by Lindenstrauss and Margulis \cite{LM}.
They showed, in particular, that there exists $\kappa>0$ such that
for a non-degenerate indefinite ternary quadratic form $Q$ satisfying an explicit Diophantine condition, the system
$$
|Q(x)-\xi|<\varepsilon,\quad \|x\|<\exp(\varepsilon^{-\kappa})\quad \hbox{ with $x\in \mathbb{Z}^3\backslash \{0\}$}
$$
is solvable for all $\xi\in\mathbb{R}$ and all sufficiently small $\varepsilon>0$.
It is not known at present whether it is possible to find a solution of 
$|Q(x)-\xi|<\varepsilon$
with size $O(\varepsilon^{-\kappa})$ for some $\kappa>0$, as in the case of quadratic forms in $n\ge 5$ variables. 

Let us first consider the case of generic forms in three variable, where we establish the best-possible result for the rate of growth of solutions. 
In general, let us denote by $\mathcal{Q}({p,q};\ell)$ the set of real 
non-degenerate quadratic forms of signature $(p,q)$
with discriminant $\ell$.

\begin{Theorem}\label{ternary} 
Let $\ell\in \mathbb{R}\backslash\{0\}$, $\xi\in \mathbb{R}$, and $\kappa>1$.
Then for almost all $Q\in \mathcal{Q}({2,1};\ell)$, the system 
$$
|Q(x)-\xi|<\varepsilon,\quad \|x\|<\varepsilon^{-\kappa}\quad \hbox{ with $x\in \mathbb{Z}^3\backslash \{0\}$}
$$
has a solution for all $\varepsilon\in (0,\varepsilon_0(Q,\xi,\kappa))$.
\end{Theorem}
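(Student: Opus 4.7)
The plan is to realise this as an instance of the general framework outlined in the introduction, taking ${\sf X}=\A^3$ with integer points $\Z^3$, $G=\SL_3(\R)$ acting linearly on $\R^3$, $F=Q_0$ a fixed reference form in $\mathcal Q(2,1;\ell)$ playing the role of the polynomial map, and $H=\SO(Q_0)\subset G$ its (semisimple) stabiliser. Every $Q\in\mathcal Q(2,1;\ell)$ arises as $Q=Q_0\circ g^{-1}$ for some $g\in G$, and the Haar measure class on $G$ pushes forward to the smooth measure class on $\mathcal Q(2,1;\ell)\cong G/H$, so the statement ``almost all $Q$'' is equivalent to ``almost all $g$.'' The relevant parameters are $a=3$, $d=2$, $m=1$, so $m/(a-md)=1$, matching the hypothesis $\kappa>1$ exactly.

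I would reduce the claim to a lattice-point counting problem. For $g\in G$, $\xi\in\R$, and $T=\varepsilon^{-\kappa}$, set
$$
N_{\varepsilon,\xi}(g)=\#\bigl\{x\in\Z^3\setminus\{0\}:\,|Q_0(g^{-1}x)-\xi|<\varepsilon,\,\|x\|<T\bigr\}.
$$
It suffices to exhibit a conull set of $g\in G$ on which $N_{\varepsilon,\xi}(g)>0$ for every sufficiently small $\varepsilon>0$. Swapping sum and integral over a fixed compact $\Omega\subset G$ and using that $\|g^{\pm 1}\|$ stays bounded on $\Omega$, a direct first-moment computation yields
$$
\int_\Omega N_{\varepsilon,\xi}(g)\,d\mu_G(g)\asymp \varepsilon T,
$$
matching the heuristic count from the introduction.

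The crux is a second-moment estimate of the form
$$
\int_\Omega\bigl(N_{\varepsilon,\xi}(g)-c\,\varepsilon T\bigr)^2\,d\mu_G(g)\ll (\varepsilon T)^{2-\delta}
$$
for some $\delta>0$ independent of $\varepsilon$ and $T$. Expanding the square produces a double sum over pairs $(x,x')\in(\Z^3\setminus\{0\})^2$ of correlations $\int_\Omega\mathbf 1[|Q_0(g^{-1}x)-\xi|<\varepsilon]\,\mathbf 1[|Q_0(g^{-1}x')-\xi|<\varepsilon]\,d\mu_G(g)$. Smoothing the indicators and rewriting the double sum as a correlation of Siegel transforms on $G/\SL_3(\Z)$, I would apply an effective mean ergodic theorem for $G$, with decay rate coming from the spectral gap for $\SL_3$, to show that off the diagonal the joint integral factorises up to a subcritical error. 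The diagonal and near-diagonal contributions from pairs with $x'$ in the $H$-orbit of $x$ must be absorbed using the volume growth of $H$-orbits on the quadric $\{Q_0=\text{const}\}$; this is precisely the step where the spectral and geometric exponents interlock to give the sharp threshold $\kappa>1$.

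The main obstacle is exactly this variance bound: one needs the spectral decay strong enough that some $\delta>0$ survives all the way down to $\kappa$ arbitrarily close to $1$, which is the critical case. Granting the bound, Chebyshev applied along the geometric sequence $\varepsilon_n=2^{-n}$, $T_n=\varepsilon_n^{-\kappa}$ gives
$$
\mu_G\bigl\{g\in\Omega:\,N_{\varepsilon_n,\xi}(g)=0\bigr\}\ll (\varepsilon_nT_n)^{-\delta}=2^{-n\delta(\kappa-1)},
$$
which is summable. Borel--Cantelli then yields a $\mu_G$-null set $E\subset\Omega$ off which $N_{\varepsilon_n,\xi}(g)>0$ for all large $n$; a routine monotonicity argument passes from the discrete sequence to all sufficiently small $\varepsilon$, and the arbitrariness of $\Omega$ together with $\sigma$-compactness of $G$ completes the proof.
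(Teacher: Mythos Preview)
Your setup --- parametrise $\mathcal Q(2,1;\ell)$ by $G=\SL_3(\R)$ via $Q=Q_0\circ g^{-1}$ with stabiliser $H=\SO_{Q_0}(\R)^0$, then run Borel--Cantelli along a dyadic sequence --- coincides with the paper's, but the central mechanism is different. The paper does \emph{not} estimate the counting function $N_{\varepsilon,\xi}(g)$ or its variance. Instead it uses the duality principle: looking for $x=\gamma x_0$ with $\gamma\in\Gamma=\SL_3(\Z)$, the condition $|Q_0(g^{-1}\gamma x_0)-\xi|<\varepsilon$ becomes $\gamma^{-1}g\in\tilde S_\varepsilon H$ for a shrinking set $\tilde S_\varepsilon\subset G$ with $m_G(\tilde S_\varepsilon)\gg\varepsilon$, equivalently $\Gamma gh\in S_\varepsilon\subset Z=\Gamma\backslash G$ for some $h\in H$. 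The effective mean ergodic theorem for the $H$-action on $L^2_0(Z)$ (Condition~A2 with $\theta=1/2$, from temperedness of $\pi_Z|_H$) then directly bounds $\mu_Z\{z:zH_t\cap S_\varepsilon=\emptyset\}\ll m_H(H_t)^{-1}\mu_Z(S_\varepsilon)^{-1}$; combined with the volume growth $m_H(H_t)\asymp t$ this yields the threshold $\kappa>m/(2\theta b)=1$ with no second moment at all. The case $\xi=0$ is handled separately via Siegel sets (Theorem~\ref{mainthm2}), a distinction your sketch does not make.

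Your direct second-moment route is in the spirit of Athreya--Margulis (which the paper cites as an alternative proof) and is not wrong in principle, but as written it has a genuine gap: you flag the variance bound as ``the main obstacle'' and then proceed ``granting the bound''. That estimate is the entire content of the argument, and your proposed derivation is muddled. You invoke an ``effective mean ergodic theorem for $G$'', but there is no averaging over growing sets in $G$ in your setup --- $g$ ranges over a fixed compact $\Omega$. What a second-moment argument actually needs is a Rogers-type bound on $\|\hat f\|_{L^2(Z)}$ for the Siegel transform of the indicator of the thin shell, and the issue there is not the spectral gap of $\SL_3$ but the unboundedness of $\hat f$ near the cusp; controlling this for shells of width $\varepsilon$ and radius $\varepsilon^{-\kappa}$ down to the critical $\kappa=1$ is exactly the work you have not done. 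The paper's duality approach sidesteps all of this: the relevant $L^2$-norm is just $\mu_Z(S_\varepsilon)^{1/2}$, and the decay comes entirely from the $H$-average, where temperedness gives the sharp rate.
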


We note that in this setting $m=1$, $a=3$ and $d=2$, so that the Pigeonhole Heuristics (cf. Question \ref{q:q}) predicts density when $\kappa>1$, and Theorem \ref{ternary} establishes density when the rate $\kappa$ is in this range.
Our method can be applied to quadratic forms in any number of variables,
and we obtain solvability of \eqref{eq:qq2} with an explicit $\kappa>0$,
but this exponent is not optimal in general (cf. Theorem \ref{mainthm}--\ref{mainthm2} below).
After a conversation in June 2014 about a preliminary version of this paper in which Theorem \ref{ternary} was proved, Athreya and Margulis \cite{AM} subsequently obtained the same result for generic quadratic forms in three variables using the random Minkowski theorem \cite{mar_m}. Their method 
is expected to eventually give optimal $\kappa$ in all dimensions, but it does not seems to apply, for example, to the problem of establishing best-possible density estimates for linear forms on quadratic varieties, which we will consider next.

Theorem \ref{ternary} was first proved in 2014. 
Subsequently, a  paper \cite{GK1} by the first author and Kelmer also 
dealt with this question using the method of the present paper. 
Bourgain \cite{Bourgain} obtained a uniform generic result for certain diagonal forms using analytic techniques, and in \cite{GK2} analogues of these results were obtained for almost every quadratic form in three variables.

\subsection{Linear maps on quadratic surfaces}\label{sec:lin}
We now turn to investigate the quantitative density of linear maps defined on quadratic surfaces.
Let $Q$ be a non-degenerate indefinite rational quadratic form in $n$ variables, $k\in \Q$,
and let 
$$
{\sf X}=\{Q=k\}
$$  be the corresponding quadratic surface.
We fix a connected component $X^0$ of the surface ${\sf X}(\R)\backslash\{0\}$
that contains at least one  integral point.
Let $F:\mathbb{R}^n\to \R^m$ be a rational linear map of full rank.
We consider the set of values $F(x)$, $x\in X^0\cap \mathbb{Z}^n$.
In analogy with the Oppenheim conjecture, one expects that 
this set is dense in $\R^m$ provided that $F$ satisfies 
some basic geometric and Diophantine assumptions.
Results in this direction were proved by Sargent in \cite{S1}, but 
while the argument of \cite{S1} proves density,
it does not allow us to deduce any quantitative bounds as in \eqref{eq:main_ineq}.
Here we establish such a bound for generic linear maps.
Let ${\sf G}$ denote the special orthogonal group of the quadratic form $Q$
and $G={\sf G}(\R)^0$.
We will consider a family of linear maps $F_g(x)=F(g^{-1}x)$ for $g\in G$, and establish the following density  estimate for their generic values, under suitable conditions, as follows.

\begin{Theorem}\label{systems}
Assume that $n\ge 4$, $n=m+3$, $Q$ has signature $(n-1,1)$, and the form $Q \rvert_{F=0}$ is non-degenerate and indefinite. Let $\kappa>m$.
Then given any $\xi\in \mathbb{R}^m$, for almost all $g\in G$, the system
\begin{equation}
\label{eq:ql}
\|F_g(x)-\xi\|<\varepsilon,\quad \|x\|<\varepsilon^{-\kappa}\quad \hbox{ with $x\in X^0\cap\mathbb{Z}^n$}, 
\end{equation}	
has solutions for all 	$\varepsilon\in (0,\varepsilon_0(g,\xi,\kappa))$.
\end{Theorem}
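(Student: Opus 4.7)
The plan is to prove Theorem~\ref{systems} by estimating the first and second moments of a counting function on $g\in G$, and deducing positivity for almost every $g$ via Chebyshev's inequality and Borel-Cantelli along a dyadic sequence of scales. Since $X^0\cap\Z^n$ decomposes into finitely many $\Gamma$-orbits with $\Gamma={\sf G}(\Z)$, it suffices to treat a single orbit $\Gamma x_0$. The signature hypothesis ensures the stabilizer $L=\Stab_G(x_0)$ is reductive (conjugate to $\SO(n-2,1)^0$ or $\SO(n-1)$ depending on the sign of $k=Q(x_0)$), identifying $X^0\simeq G/L$, and the Duke-Rudnick-Sarnak counting theorem yields
\[
N_T:=\#\{x\in\Gamma x_0:\|x\|<T\}\asymp T^{n-2}.
\]
The hypothesis that $Q|_{\ker F}$ is non-degenerate indefinite guarantees that the identity component $H$ of $\{h\in G:F\circ h=F\}$ is conjugate to $\SO(2,1)^0$, a non-compact semisimple subgroup; thus $F_g=F\circ g^{-1}$ genuinely varies over $G/H$, placing us in the $H$-invariant-polynomial framework of the paper.

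Fix a compact symmetric neighborhood $K\subset G$ of the identity, let $\chi_\varepsilon$ denote the indicator of $B(0,\varepsilon)\subset\R^m$, and define
\[
N_{\varepsilon,R}(g):=\#\{x\in\Gamma x_0:\|x\|<R,\ \chi_\varepsilon(F(g^{-1}x)-\xi)=1\}.
\]
A Jacobian computation (using unimodularity of $G$ and that $g\mapsto F(g^{-1}x)$ has derivative of magnitude $\asymp\|x\|$ by linearity of $F$) gives, for $\|x\|$ sufficiently large (so that $F(K^{-1}x)\supset B(\xi,\varepsilon)$),
\[
\int_K \chi_\varepsilon(F(g^{-1}x)-\xi)\,dg\asymp \frac{\varepsilon^m}{\|x\|^m}.
\]
Summing by Abel summation against $dN_T\asymp T^{n-3}\,dT$, and using $n-3=m$, yields the first-moment estimate
\[
E(N_{\varepsilon,R}):=\int_K N_{\varepsilon,R}(g)\,dg\asymp \varepsilon^m R,
\]
which at $R=\varepsilon^{-\kappa}$ is $\asymp\varepsilon^{m-\kappa}$ and diverges precisely when $\kappa>m$, matching the Pigeonhole Heuristics.

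The main technical work is a matching upper bound on the second moment. Expanding $\int_K N_{\varepsilon,R}^2\,dg$ as a double sum over pairs $(x_1,x_2)\in\Gamma x_0\cap B_R$, the diagonal contributes $O(E(N_{\varepsilon,R}))$, and the off-diagonal correlation
\[
\int_K \chi_\varepsilon(F(g^{-1}x_1)-\xi)\chi_\varepsilon(F(g^{-1}x_2)-\xi)\,dg
\]
is analyzed via an effective mean-ergodic theorem on $\Gamma\backslash G$: by a dualization argument the correlation factorizes as the product of the individual first moments up to an error controlled by matrix-coefficient decay for the regular representation of $G=\SO(n-1,1)^0$ on $L^2(\Gamma\backslash G)$, with explicit power-saving from the automorphic spectral gap. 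Summing the errors over pairs produces $\mathrm{Var}(N_{\varepsilon,R})=O(E(N_{\varepsilon,R}))$ with an effective rate, and Chebyshev then gives
\[
\bigl|\{g\in K:N_{\varepsilon,\varepsilon^{-\kappa}}(g)=0\}\bigr|\ll \varepsilon^{\kappa-m}.
\]
This is summable along $\varepsilon_n=2^{-n}$ since $\kappa>m$; Borel-Cantelli, followed by monotonicity in $\varepsilon$ to interpolate between dyadic scales together with a countable compact exhaustion of $G$, gives the conclusion for almost every $g\in G$.

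The principal obstacle is the precise second-moment bound: the matrix-coefficient decay on $L^2(\Gamma\backslash G)$ must furnish an exponent that, combined with the orbit growth $N_T\asymp T^{n-2}$ and the arithmetic dimensions $(n,m)=(m+3,m)$, saturates the Pigeonhole threshold exactly. As flagged in the introduction, the precise numerical alignment of the automorphic spectrum of $\SO(n-1,1)^0$, the orbit count, and the dimension balance is what produces the sharp exponent $\kappa>m$, rather than some strictly weaker bound.
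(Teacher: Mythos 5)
Your proposal takes a genuinely different route from the paper's. The paper establishes Theorem~\ref{systems} as a special case of Theorem~\ref{mainthm} via the duality principle: after reducing to the canonical form $(Q_0,F_0)$ of Sargent, it identifies the stabiliser ${\sf H}$ of $F$ inside ${\sf G}=\SO_Q$, shows $H\simeq \SO(2,1)^0$, and then invokes the shrinking-target Proposition~\ref{p:upper2} for the $H$-action on $Z=\Gamma\backslash G$. The exponent $\kappa>m/(2\theta b)$ collapses to $\kappa>m$ exactly because $b=1$ (volume growth of norm balls in $\SO(2,1)^0$) and $\theta=1/2$ (temperedness). You instead propose a first/second moment computation for the counting function $N_{\varepsilon,R}(g)$ averaged over a fixed compact $K\subset G$, followed by Chebyshev and Borel--Cantelli. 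Your first-moment computation, $E(N_{\varepsilon,R})\asymp\varepsilon^m R$, is plausible and does reproduce the Pigeonhole exponent; the structure is reasonable and in fact resembles what Athreya--Margulis later did via the random Minkowski theorem.

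However, there is a genuine gap at the heart of the argument, and it is precisely the point the paper spends Section~\ref{sec:quad} on. You assert that the off-diagonal correlation ``factorizes... up to an error controlled by matrix-coefficient decay for the regular representation of $G=\SO(n-1,1)^0$ on $L^2(\Gamma\backslash G)$,'' and that this yields $\mathrm{Var}(N)=O(E(N))$. Two problems. First, the relevant spectral object is not the $G$-representation but its \emph{restriction to} $H\simeq\SO(2,1)^0$ --- the invariance group of $F$ --- since it is the $H$-orbits that sweep out the level sets $\{F=\mathrm{const}\}$ that your thickened target lives on. Second, to hit the sharp exponent $\kappa>m$ (rather than some $\kappa>m/(2\theta)$ with $\theta<1/2$), one needs that restriction to be \emph{tempered}, i.e.\ $L^{2+}$-integrable, which is Proposition~\ref{p:spherical}. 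That proposition is nontrivial: it requires the Hirai/Knapp--Stein classification of complementary series $J(\omega,s)$ of $\SO(n-1,1)^0$, the Casselman--Mili\v{c}i\'{c} decay estimate $|F(k_1\exp(tX_0)k_2)|\ll e^{(s-\rho_{n-1}+\varepsilon)t}$, and crucially the Burger--Sarnak bound $s\le\rho_{n-1}-\tfrac12$ for spherical complementary series appearing in $L^2_0(\Gamma\backslash G)$, which combines with the Haar measure $\sinh t\,dt$ on $H$ to give $L^{2+}$-integrability. Your sketch acknowledges that ``the precise numerical alignment of the automorphic spectrum'' does the work, but supplies none of it; as written, ``matrix-coefficient decay of $G$'' is the wrong input, and the claimed variance bound $\mathrm{Var}(N)=O(E(N))$ is asserted, not derived. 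Until the second-moment decorrelation is shown to dualize to the $H$-average on $\Gamma\backslash G$ and the temperedness of $\pi|_H$ is established, the argument does not close at the optimal threshold.
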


Comparing this result with the Pigeonhole Heuristics (cf. Question \ref{q:q}),
we note that in this case $a=n-2$ and $d=1$, so that
Theorem \ref{systems} establishes the 
best possible quantitative density.
Our method can be used to show that when 
$n-m>3$, the system \eqref{eq:ql} is also solvable for some $\kappa>0$,
but this exponent is not optimal in general (cf. Theorem~\ref{mainthm} below). 

We also construct an example of a family of linear maps whose values fail to satisfy the Pigeonhole Heuristics.
Let $n\ge 4$ and 
$$
{\sf X}=\{x^{2}_1 + \dots + x^{2}_{n-1} - x^{2}_n=1\}
$$
be a quadratic surface.
For $1\le s\le n-1$, we consider a family of linear maps 
\begin{align*}
F_{{\alpha}}(x) &=x_n - \sum_{i = 1}^{s}\alpha_i x_i,
\quad\hbox{${\alpha}=(\alpha_1,\ldots,\alpha_s)\in \mathbb{R}^s$.}
\end{align*}
It is easy to check that for  $\|\alpha\|>1$, the restriction $Q|_{F_\alpha=0}$
is a non-degenerate indefinite form. Hence, it follows from \cite{S1}
that for all such irrational $\alpha\in \mathbb{R}^s$, the set of values
$\{F_\alpha(x):\, x\in {\sf X}(\mathbb{Z})\}$
is dense in the real line.
For a given $\xi\in \mathbb{R}$,
we investigate existence of integral solutions of the system
\begin{equation}
\label{eq:quad_contr}
|F_{{\alpha}}(x)-\xi|<\varepsilon,\quad \|x\|<\varepsilon^{-\kappa}\quad\hbox{ with $x\in {\sf X}(\mathbb{Z})$.}
\end{equation}
as $\varepsilon\to 0$.
A restriction on the exponent $\kappa$
can be deduced from	the Pigeonhole Heuristics. 
We observe that the set $V_\varepsilon$ of values
$F_{{\alpha}}(x)$ with $x\in \mathbb{Z}^n$ satisfying
$\|x\|\ll \varepsilon^{-\kappa}$ lies in an interval of length
$O(\varepsilon^{-\kappa})$ and has cardinality  $O(\varepsilon^{-\kappa(s+1)})$. Another bound on this cardinality
can be obtained by using that the number of $x\in {\sf X}(\mathbb{Z})$
satisfying $\|x\|<T$ is $O(T^{n-2})$.
This gives the bound on the cardinality 
$O(\varepsilon^{-\kappa(n-2)})$ which is better than the previous 
bound when $s>n-3$. One expects that the cardinality of the intersection
of $V_\varepsilon$ with a fixed interval is about $O(\varepsilon^{\kappa}|V_\varepsilon|)$.
Hence, if the approximation problem has a solution for all $\xi$
in a bounded interval, then 
we expect that $\varepsilon^{\kappa}|V_\varepsilon|\gg \varepsilon^{-1}$, and
$$
\min (\varepsilon^{-\kappa(s+1)},\varepsilon^{-\kappa(n-2)})\gg \varepsilon^{-1-\kappa}.
$$
We conclude that one might expect existence of solutions \eqref{eq:quad_contr}  for all sufficiently small $\varepsilon$ only when
$$
\kappa\ge  
\begin{cases}
\frac{1}{s},\quad s\le n-3,\\
\frac{1}{n-3},\quad s>n-3.
\end{cases}
$$
We shall show that when $s\le n-3$, the values of the linear forms $F_\alpha$
with generic $\alpha$  do not exhibit the quantitative density predicted by the Pigeonhole
Heuristics.

\begin{Prop}\label{th:naive}
	Let $\kappa_s< 1/(s-1)$ when $s\ge 2$ and $\kappa_1< 2$, and $\xi\in \mathbb{R}\backslash \mathbb{Z}$.
	Then for almost all ${\alpha}=(\alpha_1,\ldots,\alpha_s)\in \mathbb{R}^s$ and sufficiently small $\varepsilon>0$,
	the system	
	\begin{equation} \label{eq:c1}
	|F_{{\alpha}}(x)-\xi|<\varepsilon,\quad \|x\|<\varepsilon^{-\kappa_s}\quad\hbox{with $x\in {\sf X}(\mathbb{Z})$}
	\end{equation}
	has no solutions.	
\end{Prop}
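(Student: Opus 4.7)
The plan is to reduce the statement to an inhomogeneous Diophantine approximation question and apply the convergence form of Borel--Cantelli. The key starting observation is that if $x\in{\sf X}(\mathbb{Z})$ satisfies $|F_\alpha(x)-\xi|<\varepsilon$, then, writing $\mathbf{x}_s:=(x_1,\dots,x_s)$ and using $x_n\in\mathbb{Z}$, the inequality says that $\langle\alpha,\mathbf{x}_s\rangle+\xi$ is within $\varepsilon$ of an integer. Since $\xi\notin\mathbb{Z}$, the possibility $\mathbf{x}_s=0$ is ruled out as soon as $\varepsilon<\operatorname{dist}(\xi,\mathbb{Z})$, and the constraint $\|x\|<\varepsilon^{-\kappa_s}$ forces $\|\mathbf{x}_s\|_\infty<\varepsilon^{-\kappa_s}$.

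Next I set $\varepsilon_k=2^{-k}$, $N_k=2^{k\kappa_s}$, and let $A_k\subseteq[0,1]^s$ be the set of $\alpha$ for which some nonzero $\mathbf{x}_s\in\mathbb{Z}^s$ with $\|\mathbf{x}_s\|_\infty<N_k$ extends to $x\in{\sf X}(\mathbb{Z})$ with $\|x\|<N_k$ satisfying $|F_\alpha(x)-\xi|<\varepsilon_k$. The Borel--Cantelli lemma reduces the proposition to the bound $\sum_k |A_k|<\infty$. A union bound over $\mathbf{x}_s$ gives
$$
|A_k|\;\leq\;\sum_{0<\|\mathbf{x}_s\|_\infty<N_k}\mu\bigl(\{\alpha\in[0,1]^s:\exists\text{ admissible integer }x_n\text{ with }|\langle\alpha,\mathbf{x}_s\rangle+\xi-x_n|<\varepsilon_k\}\bigr).
$$
The decisive estimate is that each term on the right is at most $c\cdot\varepsilon_k/\|\mathbf{x}_s\|_\infty$. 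Combined with the dyadic identity $\sum_{0<\|\mathbf{x}_s\|_\infty<N}\|\mathbf{x}_s\|_\infty^{-1}\asymp N^{s-1}$ valid for $s\geq 2$, this yields $|A_k|\ll 2^{-k(1-\kappa_s(s-1))}$, which is summable exactly when $\kappa_s<1/(s-1)$.

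The main obstacle is establishing the slab measure bound $\varepsilon/\|\mathbf{x}_s\|_\infty$ per admissible $\mathbf{x}_s$. A single slab $|\langle\alpha,\mathbf{x}_s\rangle+\xi-x_n|<\varepsilon$ in $[0,1]^s$ does have this measure, but a naive union over all integers $x_n\in[-N_k,N_k]$ would only return $O(\varepsilon)$, producing the weaker threshold $1/s$. To beat this I would exploit the defining quadric relation $x_n^2=x_1^2+\cdots+x_{n-1}^2-1$ combined with $|x_n|<N_k$ to restrict the admissible $x_n$ for each $\mathbf{x}_s$ to values with $|x_n|\in[\sqrt{\|\mathbf{x}_s\|_2^2-1},N_k]$, and then carefully rearrange the resulting sum over dyadic shells of $\|\mathbf{x}_s\|_\infty$ so that the $N_k-\|\mathbf{x}_s\|_2$ window combines with $1/\|\mathbf{x}_s\|_\infty$ to the required bound. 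The case $s=1$, where the dyadic identity degrades to $\log N$, requires a separate and finer argument using the quadric structure more directly to produce the stated exponent $\kappa_1<2$.
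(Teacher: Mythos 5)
Your proposal has the right overall architecture (a convergence Borel--Cantelli over dyadic scales) and correctly identifies that the quadric relation must be exploited to go beyond the naive threshold $\kappa_s < 1/s$, but the decisive slab estimate is not established and, as you have set things up, is actually false. After you fix a nonzero $\mathbf{x}_s$, the set of admissible $x_n$ is not of bounded size: the quadric only forces $|x_n| \geq \sqrt{\|\mathbf{x}_s\|_2^2 - 1}$, while the constraint $\alpha\in\Omega$ (compact) together with $x_n = \langle\alpha,\mathbf{x}_s\rangle+\xi+O(\varepsilon)$ caps $|x_n|$ at roughly $c_\Omega\|\mathbf{x}_s\|_\infty$. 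For generic $\mathbf{x}_s$ (not close to a coordinate axis) these two bounds leave a window of length $\Theta(\|\mathbf{x}_s\|_\infty)$, so the union over $x_n$ comprises $\Theta(\|\mathbf{x}_s\|_\infty)$ slabs each of measure $\asymp\varepsilon_k/\|\mathbf{x}_s\|_\infty$, and the per-$\mathbf{x}_s$ measure is $\Theta(\varepsilon_k)$, not $O(\varepsilon_k/\|\mathbf{x}_s\|_\infty)$. Summing then gives $|A_k| \ll \varepsilon_k N_k^s$, i.e.\ the threshold $\kappa_s < 1/s$, and the ``careful rearrangement over dyadic shells'' you sketch cannot recover the missing factor of $\|\mathbf{x}_s\|_\infty^{-1}$ because the window length and the inverse gradient of the linear form are already both order $\|\mathbf{x}_s\|_\infty$ and exactly cancel. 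The $s=1$ case is not a degenerate boundary of this argument but its entire content, and your proposal leaves it essentially open.

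The paper's proof takes a genuinely different route via Lemma~\ref{l:dioph}: it does not compare the linear form $\langle\alpha,\mathbf{x}_s\rangle+\xi$ against the nearest integer $x_n$, but instead squares and compares $(\langle\alpha,\mathbf{x}_s\rangle+\xi)^2$ against $z = x_1^2+\cdots+x_{n-1}^2 - 1$, which bundles the quadric relation directly into the Diophantine inequality and carries the constraint $z\geq\|\mathbf{x}_s\|^2-1$. The measure of the squared slab $\{\alpha\in\Omega: |z-(\langle\alpha,\mathbf{x}_s\rangle+\xi)^2|<\delta\}$ is $\ll \delta/(\|\mathbf{x}_s\|_\infty\sqrt z)$, and $z\geq\|\mathbf{x}_s\|^2 - 1$ forces $\sqrt z\gtrsim\|\mathbf{x}_s\|$, which is exactly the extra decay your linear slabs lack. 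Proposition~\ref{th:naive} is then deduced from the resulting a.e.\ Diophantine lower bound by a short upper-bound/lower-bound contradiction, with the quadric entering a second time through $x_n^2 - (\langle\alpha,\mathbf{x}_s\rangle+\xi)^2 = O(\varepsilon^{1-\kappa_s})$. If you want to pursue your linear-slab formulation you would need a separate input equivalent to the lemma, controlling the number of $z$'s (equivalently $x_n$'s) that can actually be admissible for a given $\mathbf{x}_s$ as $\alpha$ ranges over $\Omega$; as written that count is what you have implicitly set to $O(1)$ without justification.
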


For example, when $n=4$, according to Theorem \ref{systems}, the values of 
generic linear forms on ${\sf X}(\mathbb{Z})$ are quantitatively dense
with any $\kappa>1$ (as predicted by the Pigeonhole Heuristics),
but according to Proposition \ref{th:naive} there exists a two-dimensional
family of linear forms (given by scalar multiples of $F_\alpha$'s) whose values on 
${\sf X}(\mathbb{Z})$ are not quantitatively dense with any $\kappa< 2$.

\subsection{Characteristic polynomial map}\label{sec:char}
For a matrix $x\in \hbox{M}_3(\C)$, we consider its characteristic polynomial
$$
\det(t I-x)=t^3-F_2(x) t^2-F_1(x)t-F_0(x).
$$
Here $F_2(x)$ is the trace of the matrix $x$, $F_1$ is the sum of the diagonal minors of the matrix $x$,
and $F_0(x)$ is the determinant of the matrix $x$.
These polynomials play an important role in classical Invariant Theory, as they 
generate the algebra of conjugation invariant polynomials.
Let $\ell\in \Z\backslash \{0\}$ and 
$$
{\sf X}=\{x\in \hbox{M}_3(\C): \det(x)=\ell\}
$$ 
be the constant-determinant variety. We consider the polynomal map $F=(F_1,F_2):{\sf X}\to \mathbb{C}^2$.
We observe that the variety ${\sf X}$ is invariant under the action of 
the group ${\sf G}=\hbox{SL}_3\times \hbox{SL}_3$ defined by
$x\mapsto g_1 xg_2^{-1}$ for $g=(g_1,g_2)\in {\sf G}$.
Therefore, we also have a family of polynomial maps 
$F_g(x)=F(g_1^{-1}xg_2)$ for $g=(g_1,g_2)\in {\sf G}(\R)$. We can now state the following best-possible density estimate on their generic values. 

\begin{Theorem}\label{detmap}
	Let $\kappa>1$. Then given any $\xi\in \R^2$, for almost all $g=(g_1,g_2)\in {\sf G}(\R)$,
	the system 
	$$
	\|F_g( x)-\xi\|<\varepsilon,\quad \|x\|<\varepsilon^{-\kappa}\quad \hbox{ with $x\in {\sf X}(\mathbb{Z})$}, 
	$$	
	has solutions for all 	$\varepsilon\in (0,\varepsilon_0(g,\xi,\kappa))$.
\end{Theorem}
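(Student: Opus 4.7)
The plan is to deduce Theorem~\ref{detmap} from the general framework developed in the paper (Theorem~\ref{mainthm}), by verifying that the characteristic-polynomial setup fits the template and that the numerical parameters match the Pigeonhole threshold exactly.

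First I would identify the geometric data. The group ${\sf G} = \SL_3 \times \SL_3$ acts on ${\sf X} = \{\det = \ell\}$ via $(g_1, g_2)\cdot x = g_1 x g_2^{-1}$, and this action is transitive on ${\sf X}(\R)$, exhibiting ${\sf X}$ as a ${\sf G}$-homogeneous rational affine variety. The map $F = (F_1, F_2)$ consists of two generators of the conjugation-invariant polynomial ring on $\Matn_3$, hence is invariant under the conjugation action of $\SL_3$; inside ${\sf G}$ this corresponds to the diagonal semisimple subgroup $H = \{(h,h) : h \in \SL_3\}$. The generic level set $F^{-1}(\xi)$ is (up to measure zero) a single $H$-orbit of dimension $6$, which is the geometric mechanism by which $H$-equidistribution controls the values of $F$. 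Both ${\sf G}$ and $H$ are semisimple, placing us exactly in the setting of the general theorem.

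Next I would check the exponents. The integer points ${\sf X}(\Z) = \{x \in \Matn_3(\Z) : \det x = \ell\}$ satisfy a Duke--Rudnick--Sarnak counting asymptotic $|\{x \in {\sf X}(\Z) : \|x\| < T\}| \asymp T^{6}$, so $a = 6$. Since $\deg F_1 = 2$ and $\deg F_2 = 1$, the relevant degree is $d = 2$, and the target dimension is $m = 2$. The ratio $m/(a - md) = 2/(6-4) = 1$ coincides with the threshold in the statement, so the theorem asserts density at precisely the Pigeonhole-optimal rate.

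With the geometry and exponents in hand, I would invoke the general theorem, which reduces the claim for almost every $g \in {\sf G}(\R)$ to three analytic inputs: sharp volume asymptotics for balls in ${\sf G}(\R)$, effective equidistribution of the integer orbit ${\sf G}(\Z)\cdot x_0$ on ${\sf X}(\R)$ in large Euclidean balls, and a quantitative mean ergodic theorem for the $H$-averaging operator on the relevant automorphic quotient, deduced from spectral bounds for automorphic representations of $\SL_3$. A second-moment/Borel--Cantelli argument over ${\sf G}(\R)$ then promotes the expected counting estimate to an almost-everywhere statement, yielding at least one integer-point solution in the box $\{\|F_g(x) - \xi\| < \varepsilon,\ \|x\| < \varepsilon^{-\kappa}\}$ for all sufficiently small $\varepsilon$.

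The main obstacle is the exactness of the matching: the volume exponent of balls in ${\sf G}(\R) = \SL_3(\R) \times \SL_3(\R)$, the spectral decay rate of $H$-matrix coefficients on the automorphic quotient, and the integer-point exponent $a = 6$ must line up without any loss so that the Pigeonhole threshold $\kappa = 1$ is attained rather than merely approached. The pair ${\sf G} = \SL_3 \times \SL_3$ with diagonal $H = \SL_3$ is one of the distinguished settings where this matching is exact, which is why the optimal rate is achievable here.
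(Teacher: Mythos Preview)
Your identification of the setup is correct, but there is a genuine gap: a direct application of Theorem~\ref{mainthm} does \emph{not} yield the optimal threshold $\kappa>1$ here. The bound produced by Theorem~\ref{mainthm} is $\kappa>\frac{m}{2\theta b}$, where $\theta$ comes from the integrability exponent of the $H$-representation on $L^2_0(\Gamma\backslash G)$. For the diagonal $H\simeq\SL_3(\R)$ inside $G=\SL_3(\R)\times\SL_3(\R)$, this representation is only $L^{4+}$-integrable (the automorphic spectrum of $\SL_3$ is not tempered), so $\theta=1/4$. With $b=2$ and $m=\zeta=2$ this gives only $\kappa>2$. The paper states this explicitly in the proof.

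The missing idea is a refinement that exploits the product structure $Z=Z_1\times Z_2$. One decomposes
\[
L^2(Z)=\langle 1\rangle\oplus\mathcal H_1^0\oplus\mathcal H_2^0\oplus(\mathcal H_1+\mathcal H_2)^\perp,
\]
where $\mathcal H_i$ is the subspace of $G_i$-invariant vectors. On $(\mathcal H_1+\mathcal H_2)^\perp$ the diagonal $H$-action \emph{is} tempered, giving $\theta=1/2$ there. On $\mathcal H_i^0$ one still has only $\theta=1/4$, but the crucial compensating observation is that the projection of the indicator $f_\varepsilon$ of the shrinking target onto $\mathcal H_i^0$ has $L^2$-norm $O(\varepsilon^2)$ rather than $O(\varepsilon)$, because the target sets $\tilde S_\varepsilon$ are constructed as products whose fibres over each $Z_i$-coordinate have measure $O(\varepsilon^2)$. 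Feeding these two estimates into the mean-ergodic-theorem argument yields
\[
\mu_Z(Z(t,\varepsilon))^{1/2}\ll_\eta m_H(H_t)^{-1/2+\eta}\varepsilon^{-1}+m_H(H_t)^{-1/4+\eta},
\]
and with $b=2$ both terms decay once $\kappa>1$. Without this decomposition and the improved projection bound, your argument stalls at $\kappa>2$.
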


We remark that in this case $m=2$, $a=6$ and $d=2$,
so that the Pigeonhole Heuristics also gives the range of exponents $\kappa>1$ as in the theorem.

\subsection{Gram-matrix map}\label{sec:gram}

Let $Q$ be a non-degenerate indefinite quadratic form 
in $n\ge 3$ variables of signature $(p,q)$. 
To simplify the exposition, we additionally assume that $p\ge q$ and $\det(Q)=1$. 
We will also denote by $Q$ the corresponding 
bilinear form on $\mathbb{R}^n$. Given vectors $v_1,\ldots,v_n\in \R^n$,
the \emph{Gram matrix} is defined by
$$
F_Q(v_1,\ldots,v_n)=\left(Q(v_i,v_j)\right)_{i,j=1,\ldots,n}.
$$
A tuple of vectors $(v_1,\ldots, v_n)$ is called a unimodular frame 
if the vectors $v_i$ are linearly independent, and the lattice generated by them has covolume one. The collection of unimodular frames defines a hyper-surface ${\sf X}$
in $\hbox{M}_n(\C)$, and  ${\sf X}(\Z)$ consists of unimodular frames 
with integral coordinates. We note that $F_Q$ defines a surjective map
from ${\sf X}(\R)$ to $\hbox{Sym}(p,q;1)$, the set of real symmetric matrices
with signature $(p,q)$ and determinant one.
The distribution of the values $F_Q(x)$, $x\in {\sf X}(\Z)$, was studied in \cite{GW}. 
In particular, it follows from \cite[Corollary~1.3]{GW} that if 
$Q$ is irrational, then this set of values is dense in 
$\hbox{Sym}(p,q;1)$. Here we consider the problem of quantitative density 
in this setting. 
Since the points $F_Q(x)$, $x\in {\sf X}(\Z)$,
are dense in a proper subvariety of $\hbox{M}_n(\R)$,
the Pigeonhole Heuristics proposed above has to be modified.
We recall that it was shown in \cite{GW}
that the number of $(v_1,\ldots,v_n)\in {\sf X}(\Z)$ with 
$\|v_1\|,\ldots,\|v_n\|<T$ such that $F_Q(v_1,\ldots,v_n)$
is contained in a fixed bounded domain in 
$\hbox{Sym}(p,q;1)$ is bounded by $O(T^{(p-1)q})$ when $p>q$, and by $O(T^{(p-1)p}\log T)$ when $p=q$.
Since $\hbox{Sym}(p,q;1)$ has dimension $(n-1)(n+2)/2$,
we expect that this set of values can be dense at most on the scale
$O(T^{-\delta})$ with $\delta<\frac{2(p-1)q}{(n-1)(n+2)}$.
Equivalently, density on this scale corresponds to solvability of the system of inequalities
\begin{equation}
\label{eq:fq}
\|F_Q(x)-\xi\|<\varepsilon, \quad \|x\|<\varepsilon^{-\kappa}\quad
\hbox{ with $x\in {\sf X}(\Z)$.}
\end{equation}
with $\kappa>\frac{(n-1)(n+2)}{2(p-1)q}$.
Our next result states that this best-possible rate can be achieved for generic Gram matrix maps in dimension three.

\begin{Theorem}\label{th:gramm}
Let $\kappa>5$ and $\xi\in \hbox{\rm Sym}(2,1;1)$.
Then for almost all $Q\in \mathcal{Q}(2,1;1)$, 
the system \eqref{eq:fq} has a solution for all $\varepsilon\in (0,\varepsilon_0(Q,\xi,\kappa))$.
\end{Theorem}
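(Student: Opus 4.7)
The plan is to derive Theorem \ref{th:gramm} as an instance of the general framework of the paper (Theorems \ref{mainthm}--\ref{mainthm2}). First I would identify ${\sf X}(\R)$ with $\SL_3(\R)$ via the column-matrix map, under which $F_Q$ corresponds to the left-$\SO(Q)$-invariant map $g \mapsto g^{t} Q g$. Since $\mathcal{Q}(2,1;1)$ is a single $\SL_3(\R)$-orbit under this same action, fixing a reference form $Q_0 \in \mathcal{Q}(2,1;1)$ allows one to replace the generic statement over $Q$ by the generic statement over $g \in \SL_3(\R)$ about the translated map $F_g(x) = F_{Q_0}(gx)$. This places the problem squarely in the setting of the general theorems, with ${\sf G} = \SL_3$, $H = \SO(Q_0)$ both semisimple, and the orbit map inducing an isomorphism $H(\R)\backslash {\sf G}(\R) \cong \hbox{Sym}(2,1;1)$.

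The core technical step is then an effective asymptotic for the counting function
$$N_T(g) = \#\set{x \in \SL_3(\Z) : \|x\| < T,\ F_{Q_0}(gx) \in B_\varepsilon(\xi)}.$$
Its expected main term is $c(\xi)\cdot T^{(p-1)q}\cdot \vol(B_\varepsilon(\xi)) \asymp T\cdot \varepsilon^5$, where $T^{(p-1)q}=T^1$ is the Gorodnik--Weiss \cite{GW} count of integral frames of norm up to $T$ with Gram matrix in a fixed bounded region of $\hbox{Sym}(2,1;1)$, and $\varepsilon^5$ is the Lebesgue volume of $B_\varepsilon(\xi)$ in the $5$-dimensional variety $\hbox{Sym}(2,1;1)$. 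The deviation from this main term is controlled in $L^2$-mean over $g$ via effective equidistribution of $\SL_3$-translates in $\SL_3(\Z)\backslash\SL_3(\R)$, which rests on spectral bounds for automorphic representations of $\SL_3$ combined with sharp volume asymptotics for norm balls in $\SL_3(\R)$.

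A Chebyshev--Borel--Cantelli argument along the dyadic scale $\varepsilon_k = 2^{-k}$ then promotes this variance bound to an almost-sure statement: for almost every $g$, one has $N_T(g) > 0$ whenever $T\varepsilon^5 \gg 1$. Substituting $T = \varepsilon^{-\kappa}$, this becomes exactly the condition $\kappa > 5$ of the theorem. The result is first obtained for a fixed $\xi$ in a countable dense subset of $\hbox{Sym}(2,1;1)$, and then extended to all $\xi$ in a bounded region by the local uniformity in $\xi$ of the main term together with a standard approximation step.

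The principal obstacle is matching the sharp exponent $T^{(p-1)q}=T^1$ in both the main term and the error bound, rather than the much larger naive growth $T^{\dim {\sf G}-\dim H}=T^5$ one would obtain from the volume of norm balls in $\SL_3(\R)$. This is what distinguishes the Gram-matrix case from the simpler polynomial-map cases treated earlier in the paper: one must formulate the effective mean-ergodic theorem relative to the right quotient $\SO(Q_0)(\R)\backslash \SL_3(\R)$, with the correct volume asymptotic for the preimage in $\SL_3(\R)$ of a bounded set in $\hbox{Sym}(2,1;1)$. Once this input is in place, the numerical parameters of the problem fit precisely, producing the optimal exponent $\kappa>5$ predicted by the Pigeonhole Heuristic.
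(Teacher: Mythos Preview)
Your setup matches the paper: identify ${\sf X}(\R)$ with $G=\SL_3(\R)$, ${\sf X}(\Z)$ with $\Gamma=\SL_3(\Z)$, fix a reference $Q_0\in\mathcal{Q}(2,1;1)$, and reduce the statement about generic $Q$ to one about generic $g\in G$, with $H=\SO_{Q_0}(\R)^0$ as the invariance group of $F_{Q_0}$.

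But the mechanism you propose diverges from the paper and, as written, has a genuine gap. You frame the core step as an effective lattice-point count $N_T(g)$ with main term $\asymp T\cdot\varepsilon^5$, the error to be controlled via ``effective equidistribution of $\SL_3$-translates'' and ``spectral bounds for automorphic representations of $\SL_3$''. This misidentifies the dynamical input. The paper does not count at all: it applies Proposition~\ref{p:upper2} directly --- the duality/shrinking-target argument for the action of $H$ (not $G$) on $Z=\Gamma\backslash G$. The three parameters are $\zeta=5$ (from $\dim\hbox{Sym}(2,1;1)$, matching your $\varepsilon^5$), $b=1$ (the volume-growth exponent of norm balls in $H\simeq\SO(2,1)^0$, exactly as computed in the proof of Theorem~\ref{ternary}), and $\theta=1/2$ (the representation of $H$ on $L_0^2(\Gamma\backslash G)$ is \emph{tempered}, item~(ii) of Section~\ref{sec:unitary}). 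Then $\kappa>\zeta/(2\theta b)=5$ drops out immediately.

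The ``principal obstacle'' you flag --- matching $T^1$ rather than a naive $T^{\dim G-\dim H}=T^5$ --- is an artefact of setting things up as a counting problem in $G$. In the shrinking-target formulation the only volume growth that enters is that of balls \emph{in $H$}, which is $t^1$ automatically; there is nothing to reconcile. Likewise the decisive spectral input is not the automorphic spectrum of $\SL_3$ as such, but the temperedness of its \emph{restriction to $H$}: had one only the $L^{4+}$ integrability valid for arbitrary $\SL_3(\R)$-representations without invariant vectors, Theorem~\ref{th:mean} would give $\theta=1/4$ and hence only $\kappa>10$. Your density-in-$\xi$ extension at the end is also unnecessary, since Proposition~\ref{p:upper2} already handles each fixed $\xi$ directly.
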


The results stated so far are distinguished by the fact that they establish a density rate for polynomial values which matches the a-priori bound given by the Pigeonhole Heuristics. We now note that these results all fit into a more abstract setting, consisting of families
of polynomial maps parametrized by actions of algebraic groups on homogeneous varieties, and we turn to formulate systematic general results in this direction.  
We establish two results regarding the rate of density for values of polynomials maps, the first for homogeneous affine varieties, and the second for homogeneous projective varieties. We emphasize that the rates we establish are given explicitly by data associated with the corresponding algebraic groups,  the variety and the polynomial. This dependence will be explicated in the proofs of the results stated below, together with a criterion for when the rate established matches the bound given by the Pigeonhole Heuristics in the affine case. 

\subsection{General homogeneous varieties} 
Let ${\sf X}\subset \mathbb{A}^n$ be an affine algebraic variety 
defined over $\Q$ and equipped with 
a transitive action of a connected semisimple algebraic $\Q$-group ${\sf G}\subset \hbox{GL}_n$.
Let $X^0$ be a connected component of ${\sf X}$ such that 
$X^0\cap \Z^n\ne \emptyset$.
We note that $G:={\sf G}(\R)^0$ acts transitively on the connected components of ${\sf X}(\R)$.
Given a polynomial map $F:{\sf X}\to \mathbb{A}^m$ defined over $\R$,
we set $F_g=F\circ g^{-1}$ with $g\in G$.
We assume that $F$ is invariant under a connected semisimple algebraic
$\Q$-subgroup ${\sf H}$ of ${\sf G}$ which is isotropic over $\R$.
We assume that ${\sf H}$ is $\Q$-simple, and $H={\sf H}(\R)^0$ is totally non-compact in $G$, that is, for all the 
projections $\pi_i:G\to G_i$ to its simple factors, the closures of $\pi_i(H)$ are not compact.
(This, in particular, implies that $G$ has no compact factors.)

In this setting, we prove:

\begin{Theorem}\label{mainthm}
There exists $\kappa>0$ such that given $\xi\in \mathbb{R}^m$ which is a regular value of $F:X^0\to \R^m$, for almost all $g\in G$ the system
$$
\|F_g(x)-\xi\|<\varepsilon,\quad \|x\|<\varepsilon^{-\kappa}\quad \hbox{with $x\in X^0\cap \Z^n$}
$$
has solutions for all $\varepsilon\in (0,\varepsilon_0(g,\xi))$.
\end{Theorem}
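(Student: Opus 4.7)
\textbf{Proof plan for Theorem \ref{mainthm}.}

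\textbf{Step 1 (Geometric reformulation).} By Borel--Harish-Chandra, ${\sf G}(\mathbb{Z})$ acts on $X^0\cap \mathbb{Z}^n$ with finitely many orbits, so it suffices to treat a single orbit $\Gamma x_0$ with $\Gamma={\sf G}(\mathbb{Z})$ and $x_0\in X^0\cap \mathbb{Z}^n$. Pick $y_0\in X^0$ with $F(y_0)=\xi$ (possible since $\xi\in F(X^0)$ is a regular value). Since $\xi$ is a regular value, the level set $F^{-1}(\xi)\cap X^0$ is a smooth $H$-invariant submanifold of codimension $m$; locally near $y_0$ it is modelled on $H\cdot y_0$ times a transversal disc of dimension $\dim X^0-\dim(Hy_0)$, and by smoothness of $F$ one has
$$\|F(y)-\xi\|<\varepsilon\quad\Longleftrightarrow\quad \dist(y,F^{-1}(\xi))\ll\varepsilon$$
in a fixed neighbourhood of $y_0$. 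Finding $x\in\Gamma x_0$ with $\|F_g(x)-\xi\|<\varepsilon$ and $\|x\|<\varepsilon^{-\kappa}$ is therefore equivalent to finding $\gamma\in\Gamma$ with $g^{-1}\gamma x_0$ lying within $O(\varepsilon)$ of a translate $hy_0$, $h\in H$, subject to the size constraint on $\gamma x_0$.

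\textbf{Step 2 (Counting via thickened orbits).} Write $X^0=G/L_0$, lift to $G$, and let $\psi_{\varepsilon,T}$ be a smooth approximation to the indicator of the $\varepsilon$-tube
$$\Omega_{\varepsilon,T}=\{u\in G:\ uy_0\in X^0,\ \dist(uy_0,Hy_0)<\varepsilon,\ \|uy_0\|<T\}\cdot L_0.$$
The count we want is bounded below by
$$N(g,\varepsilon,T)\ :=\ \sum_{\gamma\in\Gamma}\psi_{\varepsilon,T}(g^{-1}\gamma).$$
By transversality at the regular value $\xi$, Haar-volume of $\Omega_{\varepsilon,T}$ factors as $\vol(H_T)\cdot \varepsilon^m$ up to constants, where $H_T=\{h\in H:\ \|hy_0\|<T\}$. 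Since $H$ is totally non-compact and semisimple, $\vol(H_T)$ grows polynomially in $T$ with some exponent $a_H>0$.

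\textbf{Step 3 (Mean value and $L^2$-variance via spectral estimates).} Writing $\widetilde{\psi}_{\varepsilon,T}(g\Gamma)=\sum_{\gamma}\psi_{\varepsilon,T}(g\gamma)$ on $G/\Gamma$, the Siegel-type formula gives
$$\int_{G/\Gamma}\widetilde{\psi}_{\varepsilon,T}(g\Gamma)\,dg\ =\ \vol(G/\Gamma)^{-1}\vol(\Omega_{\varepsilon,T})\ \asymp\ \vol(H_T)\,\varepsilon^m.$$
The variance of $\widetilde{\psi}_{\varepsilon,T}$ is controlled through unitary representation theory: invariance of $F$ under the $\mathbb{Q}$-simple, totally non-compact group $H$ lets us write $\widetilde{\psi}_{\varepsilon,T}$ as an $H$-average of a transversal bump, and spectral bounds on matrix coefficients of $G$ on $L^2_0(G/\Gamma)$ (coming from property $(\tau)$ / the Burger--Sarnak / Oh style bounds) yield
$$\|\widetilde{\psi}_{\varepsilon,T}-\mathbb{E}\widetilde{\psi}_{\varepsilon,T}\|_{L^2(G/\Gamma)}\ \ll\ \vol(H_T)^{1-\theta}\,\varepsilon^{m/2}$$
for some effective $\theta>0$ (after accounting for the $\varepsilon^{-O(1)}$ Sobolev cost of smoothing $\psi_{\varepsilon,T}$). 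This is the effective mean ergodic theorem adapted to the shrinking transversal.

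\textbf{Step 4 (Almost-every $g$ via Borel--Cantelli).} Choose a geometric sequence $T_n=2^n$ and set $\varepsilon_n=T_n^{-1/\kappa}$. The preceding $L^2$ bound together with Chebyshev shows that, on any compact $K\subset G$,
$$\vol\bigl\{g\in K:\ |N(g,\varepsilon_n,T_n)-\mathbb{E}N|>\tfrac{1}{2}\mathbb{E}N\bigr\}\ \ll\ \vol(H_{T_n})^{-2\theta}\,\varepsilon_n^{-m}.$$
Choosing $\kappa$ large enough (but finite) so that $a_H(1/\kappa)\cdot 2\theta-m>0$ makes the sum over $n$ finite, and Borel--Cantelli produces a full-measure set of $g\in K$ on which $N(g,\varepsilon_n,T_n)\geq 1$ for all sufficiently large $n$. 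A standard monotonicity/interpolation argument between consecutive $T_n$ upgrades this from the subsequence $\varepsilon_n$ to all sufficiently small $\varepsilon$. Exhausting $G$ by compacts and ranging over a countable dense set of base points $y_0$ (to cover all $\xi$ via translation and continuity of regularity) completes the proof.

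\textbf{Main obstacle.} The crux is Step~3: obtaining an $L^2$ variance bound for $\widetilde{\psi}_{\varepsilon,T}$ in which the gain from the $H$-spectral gap is not destroyed by the $\varepsilon^{-O(1)}$ Sobolev cost of smoothing the characteristic function of a thin transverse $\varepsilon$-slab. One needs to exploit that the transversal direction is compactly supported and the $H$-direction is the long one, decomposing the test function so that the spectral gain is applied only in the $H$-variable, and then balancing the Sobolev norm against $\vol(H_T)^{\theta}$. The quantitative relation produced by this balance is precisely what determines the admissible range of $\kappa$, and is what one must sharpen in the special cases (Theorems \ref{ternary}, \ref{systems}, \ref{detmap}, \ref{th:gramm}) to match the Pigeonhole Heuristics.
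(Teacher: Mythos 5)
Your overall strategy --- duality plus spectral gap plus Borel--Cantelli --- is the right one and matches the paper, but there is a genuine gap precisely at the point you flag as the ``main obstacle,'' and the paper resolves it not by sharpening the Sobolev balance but by sidestepping smoothing entirely.

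In the paper's argument (Propositions~\ref{p:upper}--\ref{p:upper2} applied to the $H$-action on $Z=\Gamma\backslash G$), the target set $S_\varepsilon\subset Z$ is a thin $\varepsilon$-transversal as in your setup, but the effective mean ergodic theorem (Condition~A2, supplied by Corollary~\ref{cor:spectral gap} and Theorem~\ref{th:mean}) is applied \emph{directly to the characteristic function} $f_\varepsilon=\mathbf{1}_{S_\varepsilon}$, with no smoothing whatsoever. The key point is that the estimate
$$\left\|\pi_Z(\beta_t)f_\varepsilon-\int_Z f_\varepsilon\,d\mu_Z\right\|_{L^2}\ll_\eta m_H(H_t)^{-\theta+\eta}\|f_\varepsilon\|_{L^2}$$
is an $L^2$ (not pointwise) statement, and therefore requires no regularity on $f_\varepsilon$; the $L^2$ norm $\|f_\varepsilon\|_{L^2}=\mu_Z(S_\varepsilon)^{1/2}$ is \emph{small} (of order $\varepsilon^{\zeta/2}$), not large. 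One then observes that on the ``bad set'' $Z(t,\varepsilon)=\{z: zH_t\cap S_\varepsilon=\emptyset\}$ the averaged function $\pi_Z(\beta_t)f_\varepsilon$ vanishes identically, so restricting the $L^2$ integral to $Z(t,\varepsilon)$ yields $\mu_Z(Z(t,\varepsilon))^{1/2}\mu_Z(S_\varepsilon)\ll m_H(H_t)^{-\theta+\eta}\mu_Z(S_\varepsilon)^{1/2}$, hence $\mu_Z(Z(t,\varepsilon))\ll m_H(H_t)^{-2(\theta-\eta)}\mu_Z(S_\varepsilon)^{-1}$. This is a Chebyshev/second-moment bound on the measure of the set of $z$ whose orbit misses the target, with zero Sobolev cost. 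Borel--Cantelli with $\varepsilon=t^{-1/\kappa}$ along $t=2^n$ then gives $\kappa>\zeta/(2\theta b)$.

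Your Step~3 introduces a smoothed bump $\psi_{\varepsilon,T}$ and then must pay a factor $\varepsilon^{-O(1)}$ to control its Sobolev norm, which --- as you correctly sense --- threatens to cancel the spectral gain. This is not a problem to be balanced away; it is an artifact of smoothing, and the correct move is to eliminate it. Your proposal as written stops exactly where the real idea is needed. Two further, smaller issues: (i) the paper never invokes Borel--Harish-Chandra finiteness of orbits --- it simply fixes one integral point $x_0$ and works with the single orbit $\Gamma x_0$, which suffices since you only need \emph{some} solution; and (ii) you should make precise how ``$\|\gamma\|<\varepsilon^{-\kappa}$'' in the $\Gamma$-action translates to ``$\|x\|<\varepsilon^{-\kappa}$'' for $x=\gamma x_0$; this is handled in Proposition~\ref{p:upper2} via the coarse admissibility condition~A4 on the gauge, and you should include an analogue. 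With the non-smoothed $L^2$ argument in place, the rest of your outline (volume growth $b$ from root data, property~$\tau$/Burger--Sarnak for the spectral gap, exhaustion over dyadic scales) is in line with the paper.
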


Our second result concerns polynomial maps on projective varieties.
Let ${\sf G}\subset \hbox{GL}_n$ as before be a connected semisimple algebraic $\Q$-group.
We fix $x_0\in \Z^n$ such that the stabilizer of the line $[x_0]$ in the projective space $\mathbb{P}^{n-1}$
is a parabolic $\Q$-subgroup  of ${\sf G}$. Then ${\sf G}[x_0]\subset \mathbb{P}^{n-1}$ is a homogeneous projective variety of ${\sf G}$.
We consider the corresponding cone $X^0=G x_0$ in $\R^n$.
Let $F:\C^n\to \C^m$ be a polynomial map defined over $\R$
whose components are non-constant homogeneous polynomials. 
We assume that
$F$ is invariant under a connected semisimple algebraic
$\Q$-subgroup ${\sf H}$ of ${\sf G}$ which is isotropic over $\Q$.
As before, we also assume that ${\sf H}$ is $\Q$-simple, and $H={\sf H}(\R)^0$ is totally non-compact in $G={\sf G}(\R)^0$.

With these notations, we prove:

\begin{Theorem}\label{mainthm2}
	There exists $\kappa>0$ such that  for almost all $g\in G$ the system
	$$
	\|F_g(x)\|<\varepsilon,\quad \|x\|<\varepsilon^{-\kappa}\quad \hbox{with $x\in X^0\cap \Z^n$}
	$$
	has solutions for all $\varepsilon\in (0,\varepsilon_0(g))$.
\end{Theorem}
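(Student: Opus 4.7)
The approach parallels that of Theorem~\ref{mainthm}: the projective case is recast as a lattice-point counting problem on the cone $X^0 = Gx_0$, and the homogeneity of $F$ is combined with the $H$-invariance to reduce matters to an effective equidistribution statement on $G/\Gamma$. Let $\Gamma = {\sf G}(\Z)$. Because $x_0 \in \Z^n$ and ${\sf G}$ is defined over $\Q$, the set $X^0 \cap \Z^n$ is a finite union of $\Gamma$-orbits, so it suffices to treat a single orbit $\Gamma x_0$. The goal is to show that for almost every $g \in G$ the counting function
\[
N_{g}(T,\varepsilon) := \#\set{\gamma \in \Gamma/\Gamma_{x_0} : \|\gamma x_0\| < T,\ \|F(g^{-1}\gamma x_0)\| < \varepsilon}
\]
is strictly positive for $T = \varepsilon^{-\kappa}$ and all sufficiently small $\varepsilon$.

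The first step is a volume count. Writing each component $F_i$ as homogeneous of degree $d_i$ and decomposing $y \in X^0$ into polar coordinates $y = tv$ with $v$ on a cross-section, the target region
\[
R(T,\varepsilon) := \{y \in X^0 : \|y\| < T,\ \|F(y)\| < \varepsilon\}
\]
factors as a dilation in $t$ combined with a shrinking tube around the zero locus $\{F = 0\}$ on the cross-section. A direct computation then gives $\vol R(T,\varepsilon) \asymp \varepsilon^m T^{a - dm}$ (where $a = \dim X^0$ and $d$ is controlled by the degrees of the $F_i$), assuming an implicit non-degeneracy condition $a > dm$. Unfolding against the Haar measure yields
\[
\int_{G/\Gamma} N_g(T,\varepsilon)\, dg \;\asymp\; \vol R(T,\varepsilon) \;\asymp\; \varepsilon^m T^{a-dm},
\]
which tends to infinity for any $\kappa > m/(a - dm)$ and sets up the candidate main term.

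The main analytic ingredient is a second-moment bound. Smoothing $\mathbf{1}_R$ to a test function $\Phi_{T,\varepsilon}$ and defining the automorphic function $\Theta_{T,\varepsilon}(g) := \sum_{\gamma \in \Gamma/\Gamma_{x_0}} \Phi_{T,\varepsilon}(g^{-1}\gamma x_0)$, one computes its $L^2$-variance on $G/\Gamma$. The crucial feature is that, since $F$ is $H$-invariant, the part of $\Phi_{T,\varepsilon}$ imposing the $F$-constraint is $H$-invariant along $X^0$, so the variance reduces to a matrix-coefficient bound along $H$-translates. The assumptions that ${\sf H}$ is semisimple, $\Q$-simple, $\Q$-isotropic, and that $H$ is totally non-compact in $G$ ensure a uniform spectral gap and an $L^p$-decay estimate for $K$-finite matrix coefficients of automorphic representations of $G$ restricted to $H$, yielding a variance bound of the form $\mathrm{Var}_g \Theta_{T,\varepsilon} \ll (\vol R(T,\varepsilon))^{2-\eta}$ for some explicit $\eta > 0$. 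A Chebyshev plus Borel--Cantelli argument applied to the geometric sequence $\varepsilon_k = \theta^{-k}$, $T_k = \varepsilon_k^{-\kappa}$, then gives almost-sure existence of solutions at scale $\varepsilon_k$ once $\kappa$ is large enough relative to $\eta$, and a monotonicity argument interpolates from the sequence to all sufficiently small $\varepsilon$.

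The principal obstacle is the variance bound itself. The test functions concentrate on a thin neighborhood of the zero locus of $F$ as $\varepsilon \to 0$, so the smoothing and Sobolev estimates on $G/\Gamma$ must be uniform in the shrinking scale; the $H$-invariance of $F$ is what permits the matrix-coefficient decay to see the correct direction. Simultaneously, as $T \to \infty$ one must control the mass of the sum $\Theta_{T,\varepsilon}$ that could escape into the cusps of $G/\Gamma$, which is handled by non-divergence estimates for unipotent orbits — and this is precisely where the $\Q$-isotropy of ${\sf H}$ and the total non-compactness of $H$ in $G$ are invoked. Putting these pieces together produces some $\kappa > 0$ for which the conclusion holds, though generally well short of the heuristic lower bound $m/(a-dm)$.
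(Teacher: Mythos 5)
Your proposal takes a genuinely different route from the paper, and the difference matters. You recast the problem as lattice-point counting in a two-scale region $R(T,\varepsilon)\subset X^0$ (large in the radial direction, thin near $\{F=0\}$), smooth the indicator, and run a second-moment/Borel--Cantelli argument. The paper instead feeds everything through Proposition~\ref{p:upper}: the shrinking targets $S_\delta\subset\Gamma\backslash G$ are not tubular neighbourhoods of a point but \emph{Siegel sets} $\Gamma\Sigma(-\log\delta)$, and the constraints $\|x\|<\varepsilon^{-\kappa}$ and $\|F_g(x)\|<\varepsilon$ both drop out of the single fact that, because $[x_0]$ has parabolic stabiliser ${\sf P}$, the vector $x_0$ is a highest-weight vector, so $\|r^{-1}x_0\|\ll\delta^{c}$ whenever $r\in\Sigma(-\log\delta)$; homogeneity of $F$ then converts this into $\|F_g(x)\|\ll_g \delta^{cd}$. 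This is the mechanism your sketch never touches: you nowhere invoke the parabolic hypothesis or the highest-weight structure, which is precisely what makes the projective case work.

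The gaps in your version are therefore not cosmetic. First, the volume asymptotic $\vol R(T,\varepsilon)\asymp\varepsilon^m T^{a-dm}$ assumes the zero locus $\{F=0\}\cap X^0$ is a smooth codimension-$m$ subcone met transversally; there is no reason for this without further hypotheses, and the theorem does not assume $0$ is a regular value (this is the whole point of treating $\xi=0$ separately from Theorem~\ref{mainthm}). Second, the claimed variance bound $\operatorname{Var}_g\Theta_{T,\varepsilon}\ll(\vol R)^{2-\eta}$ is exactly where the difficulty sits: any Sobolev smoothing of $\mathbf 1_R$ has norms that blow up as a negative power of $\varepsilon$, and you must show this blow-up is beaten by the spectral decay. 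You flag the issue but do not resolve it; the paper sidesteps it entirely by working with characteristic functions and a plain $L^2$ Cauchy--Schwarz step in Proposition~\ref{p:upper}, never needing Sobolev control. Third, ``non-divergence for unipotent orbits'' is not what the paper uses to handle the cusp --- on the contrary, the paper deliberately \emph{goes into} the cusp (the Siegel set), and the lower bound $\mu_Z(S_\delta)\gg\delta^\zeta$ comes from the Jacobian $\Delta_P(a)^{-1}$ of the Iwasawa coordinates, not from any non-divergence estimate. If you want to pursue your counting approach you would be reproving a shrinking-target version of Duke--Rudnick--Sarnak for horospherical varieties from scratch; the paper's route is shorter precisely because the Siegel-set targets let the geometry of ${\sf P}$ do the work.
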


We note that the proofs of Theorems \ref{mainthm} and \ref{mainthm2} provide explicit bounds on the exponent $\kappa$ (see the formulas \eqref{eq:kkk} and \eqref{eq:kk2} in the proofs). However, in general these bounds are weaker than the bounds predicted by the Pigeonhole Heuristics.

\subsection{Outline of the proof}
We briefly sketch the strategy of the proofs of the above results.
We shall show that a proof of existence of integral solutions of a given size can be reduced to
an investigation of certain shrinking target problems on homogeneous spaces of algebraic groups.
Thus, fix an algebraic variety ${\sf X}\subset \mathbb{A}^n$ which invariant under an action 
of an algebraic group ${\sf G}$ and a polynomial 
map $F$ on ${\sf X}$. Consider a family of maps $x\mapsto F(g^{-1}x)$
parametrised by $g\in {\sf G}(\mathbb{R})$. We would like to establish 
a quantitative density of values $F(g^{-1}x)$, $x\in {\sf X}(\mathbb{Z})$,
which amounts to finding a solution to the inequality
\begin{equation}
\label{eq:uneq1}
\|F(g^{-1}x)-\xi\|<\varepsilon\quad\hbox{ with $x\in {\sf X}(\mathbb{Z})$,}
\end{equation}
with an explicit bound on the size of $x$.
We observe that the group $\Gamma:={\sf G}(\mathbb{Z})$ acts on the set
${\sf X}(\mathbb{Z})$, and we look for solutions of this inequality of the form $x=\gamma x_0$
for a fixed point $x_0\in {\sf X}(\mathbb{Z})$ and $\gamma\in \Gamma$.
Now if we further assume that the map $F$ is invariant under a subgroup $H$
of $G:={\sf G}(\mathbb{R})$, then one can show that the original problem can be reformulated
in terms of dynamics of the action of $\Gamma$ on the space $Y:=G/H$.
Namely, a solution of \eqref{eq:uneq1} can be constructed once we establish 
existence of
\begin{equation}
\label{eq:uneq2}
\gamma^{-1}gH\in T_\varepsilon\quad\hbox{ with $\gamma\in \Gamma,$}
\end{equation}
where $T_\varepsilon$ are certain shrinking subsets of $Y$.
A key ingredient in the solution of the latter problem is the duality principle in homogeneous dynamics, namely the reformulation of \eqref{eq:uneq2} in terms of 
dynamics of the action of $H$ on the space $Z:=\Gamma\backslash G$.
This amounts to analysis of another shrinking target problem:
\begin{equation}
\label{eq:uneq3}
\Gamma gh\in S_\varepsilon\quad\hbox{ with  $h\in H,$}
\end{equation}
where $S_\varepsilon$ are suitable shrinking subsets of $Z$.
To establish solvability of \eqref{eq:uneq3} with an explicit bound on $h\in H$,
we investigate the behavior of the following averaging operators
$$
\pi_Z(\beta_t)f(z)=\frac{1}{m_H(H_t)}\int_{H_t}f(zh)dm_H(h),\quad z\in Z,
$$
defined for functions $f$ on $Z$ and subsets $H_t$ of $H$, where $m_H$ denotes
a Haar measure on $H$.
The method that we use constitutes a variation of the method employed in \cite{GGN3} to obtain upper bounds for rates of distribution of dense lattice orbits.

\subsection{Structure of the paper.}
In the next section, we will investigate a general shrinking target problem
for group actions and relate this problem to estimates on certain averaging operators.
In Section \ref{sec:unitary}, we review properties of unitary representations
of semisimple groups which will allow us to deduce quantitative mean ergodic theorem
for the averaging operators introduced in Section \ref{sec:g_gen}.
Then in Section \ref{sec:quad}, we discuss automorphic representation of orthogonal groups of indefinite quadratic forms 
and establish temperedness of restrictions of these representations.
The general Theorems \ref{mainthm}--\ref{mainthm2} are proved in Section \ref{sec:proof1},
and Theorems \ref{ternary}--\ref{th:gramm} are proved in Section \ref{sec:proof2}.
Finally, in Section \ref{sec:naive}, we give a proof of Proposition \ref{th:naive}, which
is elementary and independent of the rest of the paper.

\section{General shrinking target problems}
\label{sec:g_gen}

In the present section we will establish a solution to a general shrinking target problem
that will appear in the proofs of our main results.
Let $H$ be a (non-compact) locally compact second countable unimodular group
acting (on the right) on 
a standard probability space $(Z,\mu_Z)$, preserving the measure $\mu_Z$. 
Fix a family $\mathcal{S}=\{S_\varepsilon\}_{\varepsilon\in (0,\varepsilon_0)}$ 
of shrinking measurable subsets of $Z$. 
We would like to estimate the rate at which the $H$-orbits in $Z$ visit the target subsets $S_\varepsilon$.
To make this question precise, we fix a proper measurable function
$|\cdot |:H\to \R^+$ on $H$, and view it as a gauge that measures the size of elements in $H$.
Then given $\kappa>0$ and $z\in Z$, we seek to find solutions for
\begin{equation}
\label{eq:shrink0}
z h \in S_\varepsilon,\quad |h|<\varepsilon^{-\kappa}\quad\hbox{ with $h\in H$. }
\end{equation}
Our first result shows that \eqref{eq:shrink0} can be analyzed using suitable
averaging operators. We set
$$
H_t=\{h\in H:\, |h|<t\}
$$
and consider a family of linear operators
$\pi_Z(\beta_t) :L^2(Z,\mu_Z)\to L^2(Z,\mu_Z)$ defined by
$$
\pi_Z(\beta_t)f(z)=\frac{1}{m_H(H_t)}\int_{H_t}f(zh)dm_H(h),\quad z\in Z,
$$
where $m_H$ denotes a fixed choice of a Haar measure on $H$.

We make the following assumptions:
\begin{enumerate}
\item[A1.] (\emph{volume lower bound for the family $H_t$}). The $\liminf$ 
$$
b:=\liminf_{t\to \infty} \frac{\log m_H(H_t)}{\log t}
$$
is positive.

\item[A2.] (\emph{effective mean ergodic theorem}).  
There exists $\theta > 0$ such that for every $\eta > 0$ and  $t \ge t_\eta$,
$$
\left\|\pi_Z(\beta_t)f-\int_Zf\, d\mu_Z\right\|_{L^2(Z,\mu_Z)}\ll_\eta m_H(H_t)^{-\theta +\eta}\,\|f\|_{L^2(Z,\mu_Z)}
$$
for all $f\in L^2(Z,\mu_Z)$.

\item[A3.] (\emph{local dimension bound for targets}). The $\limsup$
$$
\zeta:= \limsup_{\varepsilon\to 0^+}\frac{\log(\mu_{Z}({S}_\varepsilon))}{\log \varepsilon}
$$
is finite.
\end{enumerate}

Under the assumptions A1--A3, we prove:

\begin{Prop}\label{p:upper}
	Let $\kappa >  \frac{\zeta}{2\theta b}$. Then for almost all $z\in Z$
	and $\varepsilon\in (0,\varepsilon_0(z,\kappa))$, there exists $h\in H$ satisfying
	$$
	z h\in S_\varepsilon\quad\hbox{and}\quad |h|<\varepsilon^{-\kappa}.
	$$
\end{Prop}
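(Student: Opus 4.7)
The plan is to translate the existence question \eqref{eq:shrink0} into a positivity statement for the averaging operators $\pi_Z(\beta_t)$ applied to indicators of the targets, and then to deploy assumption A2 via Chebyshev's inequality to establish this positivity almost surely along a discrete geometric sequence of scales.

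First I would set $\chi_\varepsilon = \mathbf{1}_{S_\varepsilon}$ and observe that
$$\pi_Z(\beta_t)\chi_\varepsilon(z) \;=\; \frac{m_H\bigl(\{h\in H_t\,:\,zh\in S_\varepsilon\}\bigr)}{m_H(H_t)},$$
so that $\pi_Z(\beta_t)\chi_\varepsilon(z)>0$ yields a solution $zh\in S_\varepsilon$ with $|h|<t$. Since $\int_Z \chi_\varepsilon\,d\mu_Z=\mu_Z(S_\varepsilon)$ and $\|\chi_\varepsilon\|_{L^2}^2=\mu_Z(S_\varepsilon)$, assumption A2 combined with Chebyshev's inequality gives, for any $\eta>0$ and all $t\ge t_\eta$,
$$\mu_Z\bigl\{z\in Z\,:\,\pi_Z(\beta_t)\chi_\varepsilon(z)\le\tfrac12\mu_Z(S_\varepsilon)\bigr\}\;\ll_\eta\;\frac{m_H(H_t)^{-2\theta+2\eta}}{\mu_Z(S_\varepsilon)}.$$

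Next, fix $\kappa'\in\bigl(\zeta/(2\theta b),\kappa\bigr)$ and choose $\eta>0$ small enough that $\kappa'(b-\eta)(2\theta-2\eta)>\zeta+\eta$. Put $\varepsilon_n=2^{-n}$ and $t_n=\varepsilon_n^{-\kappa'}$. Using A1 to estimate $m_H(H_{t_n})\gg t_n^{b-\eta}$ and A3 to estimate $\mu_Z(S_{\varepsilon_n})\gg\varepsilon_n^{\zeta+\eta}$, the bound above is of order $2^{-n\delta}$ for some $\delta>0$, so the measures of the bad sets at scales $\varepsilon_n$ are summable. The Borel--Cantelli lemma then yields, for $\mu_Z$-a.e. $z\in Z$, an index $n_0(z)$ such that for every $n\ge n_0(z)$ there exists $h_n\in H_{t_n}$ with $zh_n\in S_{\varepsilon_n}$.

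Finally, to pass from the discrete sequence $\{\varepsilon_n\}$ to all sufficiently small $\varepsilon$, I interpolate using the monotonicity $S_{\varepsilon'}\subset S_\varepsilon$ for $\varepsilon'\le\varepsilon$ built into the notion of shrinking targets: given $\varepsilon\in(\varepsilon_{n+1},\varepsilon_n]$ with $n\ge n_0(z)$, the element $h=h_{n+1}$ satisfies $zh\in S_{\varepsilon_{n+1}}\subset S_\varepsilon$, while
$$|h|\;<\;2^{(n+1)\kappa'}\;<\;2^{n\kappa}\;\le\;\varepsilon^{-\kappa}$$
for all $n$ large, since $\kappa>\kappa'$. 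The main delicate point is the bookkeeping of the auxiliary parameters $\eta$ and $\kappa'$ so that the exponent controlling Borel--Cantelli summability remains strictly positive while $\kappa'$ approaches the critical threshold $\zeta/(2\theta b)$; everything else is a routine instance of the $L^2$-method for shrinking target problems.
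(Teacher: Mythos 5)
Your argument is correct and follows essentially the same route as the paper: both replace the existence question by the positivity of $\pi_Z(\beta_t)\mathbf{1}_{S_\varepsilon}$, control the measure of the bad set via assumption A2 together with a Chebyshev-type step (the paper integrates the $L^2$-bound directly over $\{z:\pi_Z(\beta_t)\mathbf{1}_{S_\varepsilon}(z)=0\}$, which is Chebyshev at threshold $\mu_Z(S_\varepsilon)$ rather than your $\tfrac12\mu_Z(S_\varepsilon)$), run Borel--Cantelli along a geometric sequence of scales, and finish by interpolating to all small $\varepsilon$ using the nesting of the shrinking targets. The only cosmetic difference is that you build in the slack by working with $\kappa'<\kappa$ from the start, whereas the paper works with $\kappa$ directly and absorbs the residual factor of $2$ by passing to $\kappa'>\kappa$ at the very end.
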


\begin{proof}
Let 
$$
Z(t,\varepsilon)=\{z\in Z:\, zH_t \cap S_\varepsilon=\emptyset\},
$$
and let $f_\varepsilon$ denote the characteristic function of the set $S_\varepsilon$.
Then $\pi_Z(\beta_t)f_\varepsilon(z)=0$ for $z\in Z(t,\varepsilon)$.
Utilizing the effective mean ergodic theorem (Assumption A2), we have for each given $\eta > 0$ and all sufficiently large $t$,
$$
\norm{\pi_Z(\beta_t)f_\varepsilon-\int_Z f_\varepsilon\, d\mu_{Z}}_{L^2(Z,\mu_Z)}\ll_\eta m_H(H_t)^{-\theta +\eta}\, \norm{f_\varepsilon}_{L^2(Z,\mu_Z)}.
$$
By integrating only on the subset $Z(t,\varepsilon)$,  this estimate implies that
\begin{equation}
\label{eq:zzz}
\mu_{Z}(Z(t,\varepsilon))^{1/2}\left( \int_{Z} f_\varepsilon \,d\mu_{Z}\right)\ll_\eta  m_H(H_t)^{-\theta+\eta}\, \|f_{\varepsilon}\|_{L^2(Z,\mu_Z)}.
\end{equation}
Since 
$$
\int_{Z} f_\varepsilon \,d\mu_{Z}=\mu_Z(S_\varepsilon)\quad\hbox{ and }\quad
\|f_{\varepsilon}\|_{L^2(Z,\mu_Z)}=\mu_Z(S_\varepsilon)^{1/2},
$$
we obtain the bound
$$
\mu_Z(Z(t,\varepsilon))\ll_\eta  m_H(H_t)^{-2(\theta-\eta)}\mu_Z(S_\varepsilon)^{-1}
$$
for all sufficiently large $t$.
By Assumption A1, for all $\eta>0$ and all sufficiently large $t$, we have 
$$
m_H(H_t)\gg_\eta t^{b-\eta},
$$
and by Assumption A3, for all $\eta>0$ and all sufficiently small $\varepsilon>0$,
$$
\mu_Z(S_\varepsilon)\gg_\eta \varepsilon^{\zeta+\eta}.
$$
Therefore, putting these estimates together, 
\begin{align*}
	\mu_Z(Z(t,\varepsilon))\ll_{\eta }
	t^{-2(b-\eta)(\theta-\eta)} \varepsilon^{-(\zeta+\eta)}.
\end{align*}
Now we take $\varepsilon=t^{-1/\kappa}$ 
and set $Z(t):=Z(t,t^{-1/\kappa})$.
This gives the estimate
$$
\mu_Z(Z(t))\ll_{\eta} t^{(\zeta+\eta)/\kappa  -2(b-\eta)(\theta-\eta)}
$$
valid for all $\eta>0$, $t>t_\eta$, and $\varepsilon\in (0,\varepsilon_\eta)$.
Since $\kappa > \frac{\zeta}{2\theta  b }$, we can choose
$\eta$ sufficiently small to obtain that for some exponent $\sigma>0$
and all sufficiently large $t$,
$$
\mu_Z(Z(t))\ll t^{-\sigma}.
$$
In particular, we conclude that  
$$
\sum_{n=1}^\infty \mu_Z(Z(2^n)) < \infty,
$$
and by the Borel--Cantelli Lemma, almost every point $z\in Z$ eventually  avoids 
the sets $Z(2^n)$. Equivalently, for almost every $z\in Z$,
there exists $n_0(z)\in \N$ such that 
$$
zH_{2^n} \cap S_{2^{-n/\kappa}}\ne \emptyset \quad\hbox{ for all $n \ge n_0(z)$.}
$$ 
Hence, we have shown that 
for almost every $z\in Z$ and $n\ge n_0(z)$,
there exists $h\in H$ such that
\begin{equation}
\label{eq:1}
zh\in S_{2^{-n/\kappa}}\quad\hbox{and}\quad |h|< 2^n.
\end{equation}
Given $\varepsilon\in (0,1)$, choose $n$ such that $2^{-n/\kappa}\le\varepsilon<2^{-(n-1)/\kappa}$.
Then \eqref{eq:1} implies that for almost every $z\in Z$ and $\varepsilon\in (0, \varepsilon_0(z,\kappa))$,
$$
zh\in S_{\varepsilon}\quad\hbox{and}\quad |h|<  2\varepsilon^{-\kappa}.
$$
It also follows that for any $\kappa'>\kappa$ and 
sufficiently small $\varepsilon$,
$$
zh \in S_{\varepsilon}\quad\hbox{and}\quad |h|< \varepsilon^{-\kappa'}.
$$
This completes the proof.
\end{proof}	

Our next result concerns shrinking targets for lattice actions on homogeneous spaces.
Let $G$ be a (non-compact) locally compact second countable unimodular group, $H$ a closed unimodular subgroup of $G$, and  $\Gamma$ be a lattice subgroup of $G$. 
We consider the action of $\Gamma$ on the homogeneous space $Y:=G/H$
and investigate the following shrinking target problem.
Let $|\cdot |:G\to \R^+$ be a proper measurable function on $G$ and 
$\mathcal{T}=\{T_\varepsilon\}_{\varepsilon\in (0,\varepsilon_0)}$ a family of 
relatively compact shrinking measurable subsets of $Y$.
Given $\kappa>0$ and $y\in Y$, we seek to find solutions to
\begin{equation}
\label{eq:shrink}
\gamma^{-1}y\in T_\varepsilon,\quad |\gamma|<\varepsilon^{-\kappa}\quad\hbox{ with $\gamma\in\Gamma$. }
\end{equation}
Without loss of generality, we may assume that the subsets $T_\varepsilon$ are of the form
$$T_\varepsilon=\tilde S_\varepsilon H$$
for some relatively compact shrinking measurable subsets $\tilde S_\varepsilon$ of $G$.

We fix Haar measures $m_G$ on $G$. 
Then $Z=\Gamma\setminus G$ is equipped with a finite $G$-invariant measure $m_{Z}$ satisfying
$$
\int_G f(g)\,dm_G(g)=\int_{Z}\Big(\sum_{\gamma\in \Gamma}f(\gamma g)\Big) dm_{Z}(\Gamma g)\quad\hbox{ for $f\in C_c(G)$.}
$$
We note that  $m_{Z}$ is not necessarily a probability measure, and its total mass  is equal to the Haar measure  of a fundamental domain 
of $\Gamma$ in $G$ which we denote by $V(\Gamma)$.
Thus $m_{Z}(Z)=V(\Gamma)$. We denote by $\mu_Z$ the probability measure $m_{Z}/V(\Gamma)$. 

We shall make the following assumptions:

\begin{enumerate}

\item[A3$^\prime$.] (\emph{local dimension bound for targets}). The $
\limsup$
$$
\zeta:= \limsup_{\varepsilon\to 0^+}\frac{\log(m_{G}(\tilde {S}_\varepsilon))}{\log \varepsilon}
$$
is finite.

\item[A4.] (\emph{coarse admissibility of $|\cdot|$}). 
For every relatively compact set $\Omega\subset G$, there exists $c_1(\Omega) =c_1 > 0$ such that  
$$
|\omega_1 g \omega_2|\le c_1 |g|\quad \hbox{for all $\omega_1,\omega_2\in \Omega$ and $g\in G$.}
$$

\end{enumerate}

Under the assumptions A1, A2, A3$^\prime$ and  A4, we prove:

\begin{Prop}\label{p:upper2}
Let $\kappa > \frac{\zeta}{2\theta b}$. Then for almost all $g\in G$
and $\varepsilon\in (0,\varepsilon_0(g,\kappa))$, there exists $\gamma\in \Gamma$ 
satisfying
$$
\gamma^{-1}gH\in T_\varepsilon\quad\hbox{and}\quad |\gamma|<\varepsilon^{-\kappa}.
$$
\end{Prop}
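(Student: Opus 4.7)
The plan is to derive Proposition \ref{p:upper2} from Proposition \ref{p:upper} via the standard duality between the $\Gamma$-action on $G/H$ and the $H$-action on $Z=\Gamma\backslash G$. More precisely, I will apply Proposition \ref{p:upper} to the right $H$-action on $(Z,\mu_Z)$, with target sets $S_\varepsilon := \pi(\tilde S_\varepsilon)$ where $\pi:G\to Z$ is the quotient map, and then convert the conclusion back to a statement about $\Gamma\cdot(gH)$.

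First, I would check that the hypotheses A1, A2, A3 of Proposition \ref{p:upper} hold in this setup. Assumptions A1 and A2 are assumed directly. For A3, note that since $\tilde S_\varepsilon$ is relatively compact and shrinking, there is some $\varepsilon_1$ such that for all $\varepsilon<\varepsilon_1$ the restriction $\pi|_{\tilde S_\varepsilon}$ is injective, so $m_Z(S_\varepsilon)=m_G(\tilde S_\varepsilon)$ and hence $\mu_Z(S_\varepsilon)=V(\Gamma)^{-1}m_G(\tilde S_\varepsilon)$. In particular, the exponent $\zeta$ from A3$^\prime$ also serves as the exponent $\zeta$ in A3 for $\mathcal S=\{S_\varepsilon\}$. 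Proposition \ref{p:upper} then provides, for almost every $z=\Gamma g\in Z$ and all $\varepsilon\in(0,\varepsilon_0(z,\kappa))$, an element $h\in H$ with $zh\in S_\varepsilon$ and $|h|<\varepsilon^{-\kappa}$, whenever $\kappa>\zeta/(2\theta b)$.

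Unwinding the definitions, $zh\in S_\varepsilon$ means that there exists $\gamma\in\Gamma$ and $s\in\tilde S_\varepsilon$ with $\gamma g h=s$. Setting $\gamma_0:=\gamma^{-1}\in\Gamma$, we obtain $\gamma_0^{-1}g H=\gamma gH=shH\subset\tilde S_\varepsilon H=T_\varepsilon$, which is the orbit membership required. The only remaining point is to convert the bound $|h|<\varepsilon^{-\kappa}$ into a bound on $|\gamma_0|=|\gamma^{-1}|$. Writing $\gamma^{-1}=ghs^{-1}$ and using that $\tilde S_\varepsilon$ (hence $\tilde S_\varepsilon^{-1}$) lies in a fixed compact set $K$ for $\varepsilon$ small, the coarse admissibility assumption A4 applied with $\Omega=\{g\}\cup K^{-1}$ yields
\[
|\gamma_0|=|ghs^{-1}|\le c_1(\Omega)\,|h|\le c_1(\Omega)\,\varepsilon^{-\kappa}.
\]
Since $c_1(\Omega)\varepsilon^{-\kappa}<\varepsilon^{-\kappa'}$ for any $\kappa'>\kappa$ and $\varepsilon$ sufficiently small, absorbing the constant into a slightly larger exponent gives the desired statement for all $\kappa>\zeta/(2\theta b)$.

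The only real obstacle is the bookkeeping required to pass from the almost-everywhere statement on $Z$ to one on $G$: one has to observe that a $\Gamma$-invariant conull subset of $G$ projects to a conull subset of $Z$ and conversely, and that the constant $c_1(\Omega)$ depends only on the compact set $\Omega$, which varies with $g$ but can be fixed locally. Once these routine points are addressed, the duality argument above gives Proposition \ref{p:upper2} with essentially no additional analytic input beyond Proposition \ref{p:upper}.
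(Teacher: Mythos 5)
Your proposal follows essentially the same duality strategy as the paper: apply Proposition \ref{p:upper} to the right $H$-action on $Z=\Gamma\backslash G$ with $S_\varepsilon$ the image of $\tilde S_\varepsilon$, then unwind to get a $\gamma\in\Gamma$ and bound $|\gamma|$ via A4. However, there is one genuine lapse in your verification of A3. You assert that because the sets $\tilde S_\varepsilon$ are ``relatively compact and shrinking,'' the projection $\pi:G\to Z$ is injective on $\tilde S_\varepsilon$ for $\varepsilon$ small enough, giving $m_Z(S_\varepsilon)=m_G(\tilde S_\varepsilon)$. This does not follow from the hypotheses of Proposition \ref{p:upper2}: ``shrinking'' does not mean shrinking to a single point, and in the applications the sets $\tilde S_\varepsilon$ need not have small diameter (in the proof of Theorem \ref{detmap} they contain a fixed factor $\Omega_0$, and the authors must \emph{separately} arrange injectivity there by taking $\Omega_0$ small). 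What you actually need, and what suffices, is only a one-sided bound $\mu_Z(S_\varepsilon)\gg m_G(\tilde S_\varepsilon)$. The paper obtains this without any injectivity: the $\tilde S_\varepsilon$ all lie in a fixed relatively compact $\Omega$, the set $\Gamma\cap(\Omega\cdot\Omega^{-1})$ is finite with, say, $N(\Omega)$ elements, so $g\mapsto\Gamma g$ is at most $N(\Omega)$-to-one on $\Omega$, and hence $\mu_Z(S_\varepsilon)\ge V(\Gamma)^{-1}N(\Omega)^{-1}m_G(\tilde S_\varepsilon)$. Replacing your injectivity claim by this multiplicity bound repairs the argument; the rest of your proof (the unwinding $\gamma gh=s$, the use of A4 with $\Omega=\{g\}\cup K^{-1}$, absorbing the resulting constant by passing to a slightly larger exponent, and the remark about moving between $\Gamma$-invariant conull sets in $G$ and conull sets in $Z$) matches the paper's reasoning.
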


\begin{proof}
Let $S_\varepsilon=\Gamma \widetilde{S}_\varepsilon$.
We note that the sets $\tilde{S}_\varepsilon$ are contained in a fixed relatively compact subset of $G$ that we denote by $\Omega$.
We observe that
since the intersection of the lattice $\Gamma$ with the relatively compact set $\Omega\cdot \Omega^{-1}$ is finite and has at most, say, $N(\Omega)$ elements, 
the map $g\mapsto \Gamma g$ is at most $N(\Omega)$-to-$1$ on $\Omega$, and it follows that 
$$
\mu_Z(S_\varepsilon)=V(\Gamma)^{-1} m_Z(S_\varepsilon)
\ge V(\Gamma)^{-1} N(\Omega)^{-1} m_G(\widetilde{S}_\varepsilon).
$$
In particular, we conclude that the sets $S_\varepsilon$ satisfy Condition A3.
We apply Proposition \ref{p:upper} to the action of $H$ on $Z$ and the targets $S_\varepsilon\subset Z$ to deduce that for almost all $g\in G$ and $\varepsilon\in (0,\varepsilon_0(g,\kappa))$, there exists $h\in H$ satisfying
$$
\Gamma gh \in S_\varepsilon\quad \hbox{and}\quad |h|<\varepsilon^{-\kappa}.
$$
It follows that there exists $\gamma\in\Gamma$ such that $\gamma^{-1}gh\in \widetilde{S}_\varepsilon$.
In particular this implies that
$$
\gamma^{-1}gH\in \widetilde{S}_\varepsilon H=T_\varepsilon.
$$
Since  $\gamma\in gh\widetilde{S}_\varepsilon^{-1}\subset gh\Omega^{-1}$,
it also follows from Condition~A4 that 
there exists $c=c(g,\Omega)>0$ such that
$|\gamma|\le c|h|$. 
Hence, we have shown that for almost all $g\in G$ and $\varepsilon\in (0,\varepsilon_0(g,\kappa))$, there exists $\gamma\in \Gamma$ satisfying
$$
\gamma^{-1}gH\in T_\varepsilon\quad\hbox{and}\quad |\gamma|<c\, \varepsilon^{-\kappa}.
$$
This also implies that for any $\kappa'>\kappa$ and 
sufficiently small $\varepsilon$, we have 
$|\gamma|< \varepsilon^{-\kappa'}.$
This proves the proposition.
\end{proof}

\section{Unitary representations and ergodic theorems} 
\label{sec:unitary}

A crucial ingredient of our arguments is a general mean  ergodic theorems for the actions
of semisimple Lie groups \cite{N,GN1,GN2} that we now recall.
Let $H$ be a connected semisimple Lie group with finite centre.
We fix a Haar measure $m_H$ on $H$.
We say that a unitary representation $\pi:H\to U(\mathcal{H})$ of $H$ is \emph{$L^p$-integrable}
if for all vectors $v,w$ in a dense subspace of $\mathcal{H}$, the matrix
coefficients $\left<\pi(h)v,w\right>$, $h\in H$, are in $L^p(H)$. 
We also say that the representation is \emph{$L^{p+}$-integrable} if 
it is $L^q$-integrable for all $q>p$.

We recall that a large class of unitary representations of $H$ arising from actions on
homogeneous spaces are known to be $L^p$-integrable. Let ${\sf G}$ be a connected semisimple
algebraic $\mathbb{Q}$-group. We set $G={\sf G}(\mathbb{R})^0$, $\Gamma={\sf G}(\mathbb{Z})\cap G$, and consider the space $Z=\Gamma\backslash G$ equipped with the probability Haar measure $\mu_Z$. Let ${\sf H}$ be a connected $\Q$-simple algebraic subgroup of ${\sf G}$ 
defined over $\Q$ and $H={\sf H}(\mathbb{R})^0$.
The following theorem is a consequence of property $\tau$, namely the uniform spectral gap property for 
automorphic representations, which is proved in full generality by Clozel \cite{Clozel},
combined with the functorial properties of the automorphic spectrum, established by Burger and Sarnak \cite{BS}. We refer to \cite{Sarnak} and \cite{Cl07} for an exposition of these results.

\begin{Theorem} \label{th:spectral gap}
There exists $p=p({\sf H})\ge 2$ such that 	
given any closed $H$-invariant subspace $\mathcal{H}$ of 
$L^2(Z,\mu_Z)$ that does not contain any non-zero $H$-invariant vectors,
the unitary representation of $H$ on $\mathcal{H}$ is $L^p$-integrable.
\end{Theorem}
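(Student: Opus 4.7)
The plan is to combine two deep inputs from the theory of automorphic representations — Clozel's proof of property $\tau$ for semisimple $\Q$-groups, and the Burger--Sarnak principle on functoriality of automorphic spectra — with a classical output of unitary representation theory: on a connected semisimple Lie group, a uniform spectral gap is quantitatively equivalent to $L^p$-integrability of matrix coefficients for some uniform exponent $p$.

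First, I would invoke the Burger--Sarnak principle. Since ${\sf H}$ is a semisimple $\Q$-subgroup of the semisimple $\Q$-group ${\sf G}$, restriction from the automorphic dual of $G$ to $H$ takes values in the automorphic dual of $H$: every irreducible $H$-subrepresentation of the representation of $G$ on $L^2(\Gamma\backslash G)$ is weakly contained in the automorphic dual of $H$. Applied to a closed $H$-invariant subspace $\mathcal{H}\subset L^2(Z,\mu_Z)$, this shows that the $H$-representation on $\mathcal{H}$ is weakly contained in the automorphic dual of $H$. The hypothesis that $\mathcal{H}$ contains no non-zero $H$-invariant vector ensures that this weak closure avoids the trivial representation, so the $H$-representation on $\mathcal{H}$ is in fact weakly contained in the non-trivial part of the automorphic dual of $H$.

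Second, I would invoke Clozel's property $\tau$ for ${\sf H}$. This asserts that the non-trivial part of the automorphic dual of $H$ is uniformly isolated from the trivial representation in the Fell topology. Combined with Cowling--Howe--Moore style control of matrix coefficients of semisimple Lie groups via powers of the Harish-Chandra $\Xi$-function — whereby a uniform spectral gap quantitatively translates into uniform decay estimates on $K$-finite matrix coefficients — this yields an exponent $p = p({\sf H})\ge 2$, depending only on ${\sf H}$, such that every element of the non-trivial part of the automorphic dual of $H$ is $L^p$-integrable. Since $L^p$-integrability of matrix coefficients is preserved under weak containment, the $H$-representation on $\mathcal{H}$ inherits this uniform $L^p$-integrability, completing the argument.

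The main technical content is not the present argument itself, which is essentially a citation chain, but rather the quantitative passage from Fell isolation (property $\tau$) to a uniform $L^p$-bound with an exponent depending only on ${\sf H}$; this passage is classical and is documented in the Sarnak and Clozel expositions already cited. The only point requiring minor care is compatibility of the arithmetic lattices on the $G$- and $H$-sides, which is standard: up to commensurability, $\Gamma\cap H$ is an arithmetic lattice in $H$ and the spectral gap is insensitive to commensurability of congruence lattices.
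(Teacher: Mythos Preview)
Your proposal is correct and follows essentially the same route as the paper: invoke Burger--Sarnak to weakly contain the $H$-representation on $\mathcal{H}$ in the (non-trivial) automorphic dual of $H$, then apply Clozel's property~$\tau$ to obtain a uniform spectral gap, and finally translate this into $L^p$-integrability for some $p=p({\sf H})$. The paper's explanation adds one structural detail you left implicit, namely that $\Q$-simplicity of ${\sf H}$ allows one to realize it as a restriction of scalars of an absolutely simple group ${\sf M}$ over a number field, which is the form in which Clozel's theorem is stated and applied to each simple factor ${\sf M}(\mathbb{K}_v)$.
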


To elucidate Theorem \ref{th:spectral gap}, note that according to \cite[Th.~1.1(a)]{BS}, the representation of $H$ on $\cH$ 
is weakly contained in the set of automorphic representations. 
Since ${\sf H}$ is $\Q$-simple, its simply connected cover $\tilde{\sf H}$ is obtained by restriction of scalars
from an absolutely simple simply connected algebraic group ${\sf M}$ defined over a number field $\mathbb{K}$.
Then 
$$
H={\sf H}(\R)^0\simeq \tilde{\sf H}(\R)= \prod_{v\in V_\infty} {\sf M}(\mathbb{K}_v),
$$
where the product is taken over all Archimedean completions of $\mathbb{K}$.
According to \cite[Th.~3.1]{Clozel}, the automorphic representations of simple factors
${\sf M}(\mathbb{K}_v)$ are isolated from the trivial representations.
This implies that the representation $H$ is $L^p$ integrable for some $p\ge 2$
where the integrability exponent is determined by bounds on the automorphic spectrum of ${\sf M}$.

The important additional feature of the uniformity of the integrability exponent will not play a role in the applications that we discuss here. We note however that this feature can be used to establish existence of integral solutions of \eqref{eq:main_ineq} satisfying additional congruence conditions, with a uniform bound $\kappa$ controlling their rate of growth.

It follows from the Moore Ergodicity Theorem \cite{Moore} that 
in many cases the subspace $\mathcal{H}$ in Theorem \ref{th:spectral gap} can be taken to be
$\mathcal{H}=L^2_0(Z,\mu_Z)$, namely, the space of $L^2$-integrable functions on $Z$ satisfying
$\int_Z f\, d\mu_Z=0$. We say that the group $H$ is totally non-compact in $G$ if for the 
projections $\pi_i:G\to G_i$ to its simple factors, the closures of $\pi_i(H)$ are not compact.
(This, in particular, implies that $G$ has no non-trivial compact factors.)
It follows from \cite{Moore} that 
if $H$ is totally non-compact, then given any unitary representation $\pi:G\to U(\mathcal{H})$,
the sets of $H$-invariant and $G$-invariant vectors coincide. 
Hence, we obtain the following corollary.

\begin{Cor} \label{cor:spectral gap}
	Under the foregoing assumptions on ${\sf H}$ and ${\sf G}$, suppose in addition that $H$ is totally non-compact in $G$.
	Then there exists $p=p({\sf H})\ge 2$ such that 
	the unitary representation of $H$ on $L_0^2(Z,\mu_Z)$ is $L^p$-integrable.
\end{Cor}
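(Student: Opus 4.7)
The plan is to deduce Corollary \ref{cor:spectral gap} directly from Theorem \ref{th:spectral gap} by verifying that the subspace $\mathcal{H} = L^2_0(Z,\mu_Z)$ satisfies the hypothesis of the theorem, namely that it contains no non-zero $H$-invariant vectors. Once this is in place, the theorem supplies some $p = p(\mathsf{H}) \ge 2$ such that the representation of $H$ on $L^2_0(Z,\mu_Z)$ is $L^p$-integrable, which is exactly the conclusion.

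First I would set up the ambient unitary representation. The group $G$ acts on $Z = \Gamma\backslash G$ by right translation, preserving the probability measure $\mu_Z$, and this induces a unitary representation of $G$ on $L^2(Z,\mu_Z)$ which preserves the decomposition $L^2(Z,\mu_Z) = \C\cdot 1 \oplus L^2_0(Z,\mu_Z)$, since $L^2_0$ is the orthogonal complement of the constants. In particular, $L^2_0(Z,\mu_Z)$ is a closed $G$-invariant, hence $H$-invariant, subspace, so the framework of Theorem \ref{th:spectral gap} applies.

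Next I would invoke the Moore Ergodicity Theorem \cite{Moore} in the form stated in the paragraph preceding the corollary: since $H$ is totally non-compact in $G$, for any unitary representation of $G$ the subspace of $H$-invariant vectors coincides with the subspace of $G$-invariant vectors. Applied to the $G$-representation on $L^2(Z,\mu_Z)$, this says that the $H$-invariants equal the $G$-invariants. The $G$-invariant vectors in $L^2(Z,\mu_Z)$ are precisely the constant functions, because $G$ acts ergodically on the finite-volume space $Z = \Gamma\backslash G$ (in fact the $G$-action is transitive). Consequently the $H$-invariant vectors in $L^2(Z,\mu_Z)$ are the constants, and their intersection with $L^2_0(Z,\mu_Z)$ is zero.

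Having verified the hypothesis, Theorem \ref{th:spectral gap} immediately yields an exponent $p = p(\mathsf{H}) \ge 2$ such that the unitary representation of $H$ on $L^2_0(Z,\mu_Z)$ is $L^p$-integrable, proving the corollary. There is no real obstacle here: the argument is essentially a bookkeeping reduction, and the only subtle point is the invocation of the Moore Ergodicity Theorem, which requires exactly the hypothesis that $H$ is totally non-compact in $G$ (this is why that assumption was added). The substantive content lies entirely in Theorem \ref{th:spectral gap}, which the corollary merely re-packages in a form directly usable for the mean ergodic theorem in Section \ref{sec:g_gen}.
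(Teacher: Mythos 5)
Your proposal is correct and follows exactly the same route the paper takes: the paragraph preceding the corollary already observes that, by the Moore Ergodicity Theorem, total non-compactness of $H$ in $G$ forces the $H$-invariant vectors in any unitary $G$-representation to coincide with the $G$-invariant vectors, so $L^2_0(Z,\mu_Z)$ has none, and Theorem \ref{th:spectral gap} then applies directly. Your write-up merely spells out the bookkeeping (that $L^2_0$ is $G$-invariant and that ergodicity of the transitive $G$-action identifies the $G$-invariants with the constants), which is precisely what the paper intends.
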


Let us consider a measure-preserving action of $H$ on a standard probability space $(Z,\mu_Z)$.
For a measurable subset $B\subset H$ with $0<m_H(B)<\infty$,
we define the averaging operator
$$
\pi_Z(\beta)f(z)=\frac{1}{m_H(B)}\int_{B}f(zh)dm_H(h),\quad z\in Z,
$$
for $f\in L^2(Z,\mu_Z)$. We recall the following mean ergodic theorem (see \cite[Th.~4.5]{GN2}):

\begin{Theorem}\label{th:mean}
	Let $\mathcal{H}$ be a closed $H$-invariant subspace of $L^2(Z,\mu_Z)$
	such that the corresponding unitary represenation of $H$ on $\mathcal{H}$ is $L^{p+}$-integrable. Then for all $f\in \mathcal{H}$,
	$$
	\left\|\pi_Z(\beta)f\right\|_{L^2(Z,\mu_Z)}\ll_\eta m_H(B)^{-\frac{1}{2n_e(p)}+\eta}\,\|f\|_{L^2(Z,\mu_Z)},\quad \eta>0,
	$$
	where $n_e(p)$ is defined as $n_e(p)=1$ when $p=2$ and as the least even integer $\ge p/2$ when $p>2$.
\end{Theorem}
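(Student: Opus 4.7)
The goal is a quantitative operator-norm bound for $\pi_Z(\beta)$ on $\mathcal{H}$ where $\beta=\mathbf{1}_B/m_H(B)$, so it suffices to prove
$$
\|\pi_Z(\beta)\|_{\mathrm{op}} \ll_\eta m_H(B)^{-\frac{1}{2n_e(p)}+\eta}.
$$
The standard device is to pass to a high power and use self-adjointness. Set $T=\pi_Z(\beta)^\ast\pi_Z(\beta)$. Since $T$ is positive self-adjoint on $\mathcal{H}$, its spectral radius equals its norm, so for any $n\in\mathbb{N}$ one has $\|\pi_Z(\beta)\|^{2n}=\|T\|^n=\|T^n\|=\|\pi_Z(\phi)\|$, where
$$
\phi=(\beta^{\ast}\ast\beta)^{\ast n},\qquad \beta^{\ast}(h)=\overline{\beta(h^{-1})}.
$$
I would take $n=n_e(p)$; the key point of this choice is that $2n_e(p)\geq p$ is an even integer (or $1$), so matrix coefficients of the $n$-fold tensor power $\pi^{\otimes n}$ acting on a dense subspace lie in $L^{2+\varepsilon}(H)$, making $\pi^{\otimes n}$ weakly contained in the regular representation.

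Next, I would bound the $L^q$-norms of $\phi$ by direct estimation. A trivial computation gives $\|\phi\|_{L^1(H)}=1$, while convolution with $\beta$ and $\beta^\ast$ repeatedly never increases the $L^\infty$-norm past $m_H(B)^{-1}$; log-convex interpolation then yields $\|\phi\|_{L^q(H)}\leq m_H(B)^{-(1-1/q)}$ for every $q\in[1,\infty]$. To convert this into an operator-norm estimate for $\pi_Z(\phi)$ on $\mathcal{H}$, I would invoke the tensor-power/Cowling--Haagerup--Howe principle: the $L^{p+}$-integrability assumption implies that $\pi^{\otimes n_e(p)}$ is weakly contained in $\lambda_{L^2(H)}$, and therefore $\|\pi_Z(\phi)\|\leq \|\lambda(\phi)\|$ up to a constant depending on the $K$-type decomposition. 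A standard Kunze--Stein type interpolation for $\lambda(\phi)$ then yields
$$
\|\pi_Z(\phi)\|_{\mathrm{op}}\ll_\eta m_H(B)^{-1+\eta}.
$$
Combining this with the identity $\|\pi_Z(\beta)\|^{2n_e(p)}=\|\pi_Z(\phi)\|$ of the first paragraph gives exactly the desired exponent $-\frac{1}{2n_e(p)}+\eta$ after adjusting $\eta$.

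The main obstacle is the uniformity of the matrix-coefficient estimate: $L^{p+}$-integrability only guarantees that matrix coefficients on a dense subspace (typically $K$-finite vectors) lie in $L^{p+\varepsilon}$, while the operator norm requires control over \emph{all} pairs of unit vectors. This is the classical technical point addressed by a density argument together with the uniform boundedness of the Harish-Chandra $\Xi$-function and an estimate of $K$-type multiplicities; alternatively, one can avoid the issue entirely by passing through the tensor power argument sketched above, which transfers the problem to the regular representation where convolution bounds are available without restriction. This is precisely the approach taken in \cite[Thm.~4.5]{GN2}, and I would follow it to complete the proof.
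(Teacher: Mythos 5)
Your proposal correctly identifies the ingredients from \cite[Thm.~4.5]{GN2} --- the spectral radius identity $\|\pi_Z(\beta)\|^{2n}=\|\pi_Z(\phi)\|$, the Cowling--Haagerup--Howe tensor-power principle, and Kunze--Stein --- but the step that combines them is a non sequitur, and it is precisely the step that carries all the weight. Weak containment of $\pi_Z^{\otimes n}$ in $\lambda_H$ gives $\|\pi_Z^{\otimes n}(f)\|\le\|\lambda_H(f)\|$; it does \emph{not} give $\|\pi_Z(f)\|\le\|\lambda_H(f)\|$, and the latter is false for non-tempered $\pi_Z$. For instance, take an $L^{4+}$ complementary series representation of $\mathrm{SL}_2(\mathbb{R})$ with $\beta$ the normalised indicator of a large ball: then $\|\pi(\phi)\|=\|\pi(\beta)\|^{2n}\approx m_H(B)^{-1}$ while $\|\lambda(\phi)\|=\|\lambda(\beta)\|^{2n}\approx m_H(B)^{-n}$, so the asserted inequality fails for $n\ge 2$. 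Your Kunze--Stein step is also off: $\|\lambda_H(f)\|\le C_q\|f\|_q$ holds only for $q<2$, so from $\|\phi\|_q\le m_H(B)^{-(1-1/q)}$ you can at best extract $\|\lambda(\phi)\|\ll_\eta m_H(B)^{-1/2+\eta}$, not $m_H(B)^{-1+\eta}$. The intermediate claim $\|\pi_Z(\phi)\|\ll_\eta m_H(B)^{-1+\eta}$ is in fact correct (it is equivalent to the theorem), but neither of the two inequalities you offer to establish it actually holds.

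The real bridge bounds $\|\pi_Z(\beta)\|$ rather than $\|\pi_Z(\phi)\|$, and passes through Jensen's inequality --- this is also where the \emph{evenness} of $n=n_e(p)$ is genuinely used, which your sketch never invokes. Set $\nu=\beta^{*}*\beta$, so that $\pi_Z(\nu)\ge 0$, and consider $\sigma=\pi_Z^{\otimes n/2}\otimes\overline{\pi_Z}^{\otimes n/2}$ (the conjugate factor is why $n$ must be even). Since $2n\ge p$, the $K$-finite matrix coefficients of $\sigma$ lie in $L^{2+}(H)$, so by Cowling--Haagerup--Howe $\sigma$ is weakly contained in $\lambda_H$. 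For a unit vector $v$ and $V=v^{\otimes n/2}\otimes\bar v^{\otimes n/2}$ one has the identity $|\langle\pi_Z(h)v,v\rangle|^n=\langle\sigma(h)V,V\rangle$, and since $\nu$ is a probability density, Jensen gives
$$
\langle\pi_Z(\nu)v,v\rangle
\;\le\;\left(\int_H \nu(h)\,|\langle\pi_Z(h)v,v\rangle|^n\,dm_H(h)\right)^{1/n}
=\langle\sigma(\nu)V,V\rangle^{1/n}
\;\le\;\|\lambda_H(\nu)\|^{1/n}.
$$
Hence $\|\pi_Z(\beta)\|^2=\|\pi_Z(\nu)\|\le\|\lambda_H(\nu)\|^{1/n}=\|\lambda_H(\beta)\|^{2/n}$, and Kunze--Stein gives $\|\lambda_H(\beta)\|\le C_q\, m_H(B)^{1/q-1}\ll_\eta m_H(B)^{-1/2+\eta}$, which is exactly the exponent $-\frac{1}{2n_e(p)}+\eta$ in the statement. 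In the correct argument your convolution-power identity and the $L^q$ estimates on $\phi$ play no role. (Note also that the present paper only cites this theorem from \cite{GN2} and does not reprove it.)
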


An important role in our considerations is also played by the notion of weak containment of unitary representations. We say that a unitary representation $\pi_1$ of $H$ is \emph{weakly contained} 
in a unitary representation $\pi_2$ of $H$ if every diagonal matrix coefficient
$\left<\pi_1(h)v,v\right>$ for the representation $\pi_1$ can be approximated
uniformly on compact subsets of $H$ by finite sums of 
diagonal matrix coefficient
$\left<\pi_2(h)w_i,w_i\right>$ for the representation $\pi_2$.
We refer to \cite[App.~F]{bhv} for an extensive discussion of the weak containment property.
We denote by $\lambda_H$ the regular representation of $H$ on $L^2(H)$.
For a closed subgroup $H_0$ of $H$,
\begin{itemize}
\item if $\pi_1$ is weakly contained in $\pi_2$,
then $\pi_1|_{H_0}$ is weakly contained in $\pi_2|_{H_0}$,
\item $\lambda_H|_{H_0}$ is weakly contained
in the regular representation $\lambda_{H_0}$.
\end{itemize}

A unitary representation $\pi$ of $H$ is called \emph{tempered}
if it is weakly contained in the regular representation $\lambda_H$.
According to \cite{chh}, in our set-up a unitary representation
is tempered if and only if it is $L^{2+}$-integrable.

We list several examples of $L^p$-integrable and tempered unitary representations that will play 
a role in our arguments:

\begin{enumerate}
	\item[(i)] Let $G$ be a connected simple Lie group with finite centre of rank $\ge 2$.
	Then there exists a uniform $p\ge 2$ such that any unitary representation of
	$G$ without $G$-invariant vectors is $L^p$-integrable.
	In particular, for $G=\hbox{SL}_n(\mathbb{R})$, $n\ge 3$, 
	such representations are known to be $L^{2(n-1)+}$ integrable.
	
	\item[(ii)] Let $G=\hbox{SL}_3(\mathbb{R})$ and $H=\hbox{SO}_Q(\mathbb{R})^0\subset G$
	for a ternary non-degenerate indefinite quadratic form $Q$.
	Then for any unitary representation $\pi$ of $G$ without $G$-invariant vectors, the representation $\pi|_H$ is tempered.
	
	\item[(iii)] Let $G=\hbox{SL}_3(\mathbb{R})\times \hbox{SL}_3(\mathbb{R})$
	and $H\simeq \hbox{SL}_3(\mathbb{R})$ be the diagonal subgroup of $G$.
	Then for any unitary  
	representation $\pi$ of $G$ without invariant vectors
	for the $\hbox{SL}_3(\mathbb{R})$-factors of $G$, the representation $\pi|_H$ is tempered.
	
	\item[(iv)] 
	Let ${G}=\hbox{SO}_Q(\mathbb{R})^0$ where 
	$Q$ is a non-degenerate rational quadratic form in $n\ge 4$ variables of signature $(n-1,1)$.
	Let $H$ be a closed subgroup of $G$ such that 
	$H\simeq \hbox{SO}(2,1)^0$. We consider the unitary representation 
	$\pi$ of $G$ on $L^2_0(\Gamma\backslash G)$ for a congruence subgroup $\Gamma$ of $G$.
	Then the restriction $\pi|_H$ is tempered.
\end{enumerate}

(i) follows from \cite{li,O1,O2},
(ii) and (iii) are observed in \cite[Sec.~4]{GGN3}, and (iv) will be proved in the next section.

\section{Automorphic representations of orthogonal groups}
\label{sec:quad}

Let $Q$ be a non-degenerate rational quadratic form in $n$ variables of signature $(n-1,1)$, $n \ge 4$.
We denote by ${\sf G}=\hbox{SO}_Q\subset \hbox{GL}_n$
the special orthogonal group of $Q$ and set $G={\sf G}(\R)^0$.
Let $H$ be any closed subgroup of $G$ which satisfies 
$H\simeq \hbox{SO}(2,1)^0$. We fix a congruence subgroup $\Gamma$ of $G$. Our aim is to analyse the unitary representation  arising from the action of $H$ on $L_0^2(\Gamma\backslash G)$. In this section, we prove:

\begin{Prop} \label{p:spherical}
	For $n\ge 4$, the unitary representation of $H$ on $L_0^2(\Gamma\backslash G)$ is tempered.
\end{Prop}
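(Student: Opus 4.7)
The plan is to combine the Burger--Sarnak restriction principle with the classification of the unitary dual of the rank-one group $G = \mathrm{SO}(n-1,1)^0$ and explicit Ramanujan-type bounds on the automorphic spectrum. By \cite{BS}, every irreducible unitary $G$-subrepresentation of $L^2_0(\Gamma\backslash G)$ belongs to the automorphic dual of $G$, so it suffices to show that every non-trivial irreducible automorphic representation $\pi$ of $G$ has tempered restriction $\pi|_H$. The irreducible unitary dual of the rank-one group $G$ decomposes into tempered representations, the trivial representation, and the complementary series $\pi_s$ parametrised by $s\in (0,(n-2)/2)$. Tempered representations of $G$ restrict to tempered representations of $H$ (since, as an $H$-representation, $\lambda_G|_H$ is an amplification of $\lambda_H$), and the trivial representation is excluded by passing to $L^2_0$; so the task reduces to analysing the complementary series that occur in the automorphic dual.

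The matrix-coefficient side of the estimate is direct. After conjugation we may arrange that the one-parameter Cartan subgroup $\{a_t\}$ of $H$ coincides with a Cartan subgroup of $G$. Along this subgroup, the spherical function of $\pi_s$ decays like $e^{-((n-2)/2-s)t}$, while the Haar density of $H$ in the $KAK$ decomposition grows like $e^{t}$. A direct integration then shows that $\pi_s|_H \in L^{p+}(H)$ precisely for $p > 2/(n-2-2s)$, so temperedness of $\pi_s|_H$ (that is, $L^{2+}$-integrability on $H$) is equivalent to the parameter bound $s < (n-3)/2$. The same leading exponential decay rate governs the $K$-finite matrix coefficients of the non-spherical complementary series, so the conclusion extends to those as well.

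It remains to verify the bound $s<(n-3)/2$ for every complementary series occurring in the automorphic dual of $G=\mathrm{SO}(n-1,1)^0$ with $n\ge 4$. The cuspidal contribution satisfies Ramanujan-type bounds of Kim--Sarnak type, which confine the parameter well below $1/2$; since $(n-3)/2 \ge 1/2$ for $n\ge 4$, this is strictly below the tempering threshold. The residual (non-cuspidal) contribution is constructed from representations of proper Levi subgroups via Eisenstein-series residues and the Arthur--Langlands classification, and its Ramanujan exponents can be controlled by iterated Burger--Sarnak functoriality applied along the chain $\mathrm{SO}(2,1)\subset \mathrm{SO}(3,1)\subset \cdots \subset G$. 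Clozel's uniform spectral gap \cite{Clozel} then ensures that the resulting bound is uniform in the congruence subgroup $\Gamma$. The main obstacle is precisely this automorphic bookkeeping for the residual part of the spectrum and its matching against the threshold $(n-3)/2$; once this is in place, the spherical matrix-coefficient computation of the previous paragraph closes the argument.
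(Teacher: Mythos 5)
Your skeleton matches the paper's: decompose $L^2_0(\Gamma\backslash G)$ into irreducibles via Burger--Sarnak, note that tempered constituents restrict to tempered $H$-representations by a weak-containment argument, and reduce the complementary-series case to comparing the $K$-finite matrix-coefficient decay $e^{(s-\rho_{n-1}+\varepsilon)t}$ with the $H$-Haar density $\sinh t\sim e^t$ in $K_HA^+K_H$ coordinates, which gives temperedness of $\pi_s|_H$ exactly when $s\le\rho_{n-1}-\tfrac12=(n-3)/2$. That part of your computation is correct and is essentially what appears in the paper.

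The gap is at the step you yourself flag as ``the main obstacle'': establishing that every spherical complementary series $J(1_M,s)$ appearing in $L^2_0(\Gamma\backslash G)$ satisfies $s\le\rho_{n-1}-\tfrac12$. The paper does not need Kim--Sarnak, Arthur--Langlands, or any ``automorphic bookkeeping'' of cuspidal versus residual spectrum here: this bound is exactly \cite[Thm.~1.2(a)]{BS}, stated for $\SO(n-1,1)$ with $n\ge 4$ and congruence $\Gamma$ (and in fact proved precisely by the iterated-functoriality idea you gesture at). Your framing is confused in two ways. First, the Kim--Sarnak bounds concern $\GL_2/\mathbb{Q}$ (and functorial lifts thereof); they do not ``confine the parameter well below $1/2$'' for complementary series of $\SO(n-1,1)$ for general $n$, and indeed for large $n$ the admissible automorphic range extends all the way up to $\rho_{n-1}-\tfrac12$, which is much larger than $\tfrac12$. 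Second, you treat the non-spherical complementary series as requiring the same automorphic input, when in fact the classification of the unitary dual of $\SO(n-1,1)^0$ (Knapp--Stein, Baldoni Silva--Barbasch) already gives the unconditional, purely representation-theoretic bound $s\le\rho_{n-1}-1$ for every $J(\omega,s)$ with $\omega\ne 1_M$; no spectral-gap theorem is needed there. So while you have correctly identified the arithmetic of the exponents, the argument as written does not close, and the fix is to replace the speculative Kim--Sarnak/Arthur discussion by a direct citation of the Burger--Sarnak bound for the spherical case and of the unitary-dual classification for the non-spherical case.
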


\begin{proof} 
Let $\pi$ denote the unitary representation of $G$ on $L_0^2(\Gamma\backslash G)$.
We consider its direct integral decomposition 	
$\pi=\int^\oplus \pi_z$
into irreducible representations $\pi_z$ of $G$.
We shall use the properties of unitary representations reviewed in Section~\ref{sec:unitary}.
It is sufficient to show that the representation $\pi|_H$ is $L^{2+}$-integrable.
We note that $\pi|_H$ is $L^p$-integrable provided that 
$\pi_z|_H$ is $L^p$-integrable for almost all $z$. 
Hence, it remains to verify that for the representations $\pi_z$,
which appear in the direct integral decomposition of $\pi$, the restrictions  
$\pi_z|_H$ are $L^{2+}$-integrable.
When $\pi_z$ is a tempered representation of $G$, it is weakly contained in the regular representation $\lambda_G$.
Also $\pi_z|_H$ is weakly contained in $\lambda_G|_H$,
and $\lambda_G|_H$ is weakly contained in $\lambda_H$.
Hence, we conclude that in this case 
the restriction $\pi_z|_H$ is tempered and, in particular, $L^{2+}$-integrable. 
Now it remains to
analyse the restrictions $\pi_z|_H$ for non-tempered representations $\pi_z$.

There exists a Cartan involution of $G$ which defines a Cartan involution of $H$
(see, for instance, \cite[p. 53]{most}).
Let $K$ be the corresponding maximal compact subgroup of $G$. Then $K_H=K\cap H$
is a maximal compact subgroup of $H$.
We choose a Cartan subgroup $A$ of $H$.
Since $G$ has rank one, $A$ is also a Cartan subgroup of $G$.
We have Cartan decompositions 
$$
G=KA^+K\quad\hbox{and}\quad H=K_HA^+K_H.
$$
Let $\mathfrak{a}=\hbox{Lie}(A)$.
The adjoint action of $\mathfrak{a}$ on the Lie algebra of $G$
has exactly one simple root $\alpha$ of multiplicity $n-2$.
We fix $X_0\in \mathfrak{a}$ such that $\alpha(X_0)=1$.
This gives the identification of $\mathfrak{a}\simeq \mathbb{R}$
which follows the standard convention that the half-sum of positive roots
corresponds to $\rho_{n-1}:=\frac{n-2}{2}$.
With this normalisation the Haar measure on $H$ is given by
\begin{equation}
\label{eq:haar}
\int_H f\, dm_H=\int_{K_H\times \R^+\times K_H} f(k_1\exp(tX_0)k_2)\, \sinh(t) dk_1dtdk_2,\quad f\in C_c(H).
\end{equation}

We now recall some facts from the classification of the irreducible unitary representations of $G\simeq \hbox{SO}(n-1,1)^0$ \cite{h,ks,T,BSB}. It is known that all non-tempered irreducible representations of $G$ are given by the complementary series representations $J(\omega,s)$
which are constructed as follows.
Let $P=MAN$ be a proper parabolic subgroup  of $G$.
For $s\in \mathbb{C}$, we define a character of $A$ by $\chi_s(\exp(tX_0))=e^{st}$.
Given  an irreducible finite-dimensional representation 
$\omega$ of $M$ and $s\in\mathbb{C}$ with $\hbox{Re}(s)>0$, we construct the unitary 
representation of $G$ which is induced from the representation $\sigma\otimes \chi_s$
of $P$. Then $J(\omega,s)$ is the unique irreducible quotient of this representation.
All non-tempered irreducible unitary represenations of $G\simeq \hbox{SO}(n-1,1)^0$ 
are of this form $J(\omega,s)$ with $s\in I(\sigma)$ for an interval $I(\sigma)\subset (0,\rho_{n-1}]$. When $\sigma=1_M$ is the trivial representation of $M$, the interval is 
$I(1_M)= (0,\rho_{n-1}]$. This gives the spherical complementary series representations of $G$.
When $\sigma\ne 1_M$, we always have $I(\sigma)\subset (0,\rho_{n-1}-1]$
(see, for instance, \cite[Prop.~49--50]{ks}).
The $K$-finite matrix coefficients $F$ of the representations $J(\omega,s)$
satisfy the bound
\begin{equation}
\label{eq:F}
|F(k_1\exp(tX_0)k_2)|\ll_{F,\varepsilon} e^{(s-\rho_{n-1}+\varepsilon)t}\quad\hbox{$k_1,k_2\in K$ and $t\in \mathbb{R}^+$}
\end{equation}
for all $\varepsilon>0$ (see \cite{CM} or \cite[p.~253]{k}).

Let us consider the case when $\pi_z$ 
appearing in the decomposition of $\pi$ 
is a non-tempered spherical irreducible representation of $G$. 
While \eqref{eq:F} gives allows for arbitrarily slow decay when $s\to\rho_{n-1}$,
it was shown in \cite[Thm. 1.2(a)]{BS} that when $n \ge 4$, 
the complementary series representations $J(1_M,s)$ which appear in
the decomposition of $L^2_0(\Gamma\backslash G)$ must have 
$s \le \rho_{n-1}-\frac{1}{2}$. Hence, we deduce that for all 
$K$-finite matrix coefficients $F$ of $\pi_z$, we have the bound
$$
|F(k_1\exp(tX_0)k_2)|\ll_{F,\varepsilon} e^{(-1/2+\varepsilon)t}\quad
\hbox{for $k_1,k_2\in K$ and $t\in \mathbb{R}^+$.}
$$
In view of \eqref{eq:haar}, it follows that $F|_H$
are in $L^{2+\eta}(H)$ for all $\eta>0$. Thus, $\pi_z|_H$ is 
$L^{2+}$-integrable, as required.

Now suppose that $\pi_z$ is non-tempered non-spherical irreducible representation of $G$.
Then $\pi_z$ can be identified with $J(\omega,s)$ with $s\le\rho_{n-1}-1$.
Hence, all 
$K$-finite matrix coefficients $F$ of $\pi_z$ satisfy
$$
|F(k_1\exp(tX_0)k_2)|\ll_{F,\varepsilon} e^{(-1+\varepsilon)t}\quad
\hbox{for $k_1,k_2\in K$ and $t\in \mathbb{R}^+$.}
$$
Then it follows that  $F|_H$
are in $L^{2+\eta}(H)$ for all $\eta>0$ (in fact, in $L^{1+\eta}(H)$ for $\eta>0$),
and $\pi_z|_H$ is 
$L^{2+}$-integrable.
This completes the proof that the representation $\pi|_H=\int^\oplus (\pi_z|_H)$ is tempered.
\end{proof}

\section{Proofs of Theorems \ref{mainthm}--\ref{mainthm2}}
\label{sec:proof1}

\begin{proof}[Proof of Theorem \ref{mainthm}]
	We recall that $X^0$ denotes a fixed connected component of ${\sf X}(\R)$ such that $X^0\cap \Z^n\ne \emptyset$. We fix $x_0\in X^0\cap \Z^n$. Since ${\sf X}(\R)$ consists of finitely many open (and closed) orbits of $G={\sf G}(\R)^0$ (see \cite[\S 3.2]{PR}), we have $X^0=G x_0$. 
	Moreover, we recall that the map $g\mapsto gx_0$, $g\in G$, is a submersion.
	We pick $g_0\in G$ such that 
	$F(g_0^{-1} x_0)=\xi$ and $F$ is a submersion at $g_0^{-1}x_0$.
	Then the map $G\to \R^m:g\mapsto F(g^{-1}x_0)$ is also a submersion at $g_0$.
	Using a canonical coordinate system for this submersion in a neighbourhood of $g_0$,
	one can construct a collection of compact shrinking subsets $\widetilde{S}_\varepsilon$,
	$\varepsilon\in (0,\varepsilon_0)$ of $G$ such that 
	\begin{equation}
	\label{eq:s}
	\|F(s^{-1}x_0)-\xi\|<\varepsilon\quad\hbox{for all $s\in \widetilde{S}_\varepsilon$},
	\end{equation}
	and 
	\begin{equation}
	\label{eq:e}
	m_G(\widetilde{S}_\varepsilon)\gg \varepsilon^m\quad\hbox{for all $\varepsilon\in (0,\varepsilon_0)$ }.
	\end{equation}
	Since ${\sf G}$ is a connected semisimple $\Q$-group, the subgroup $\Gamma={\sf G}(\Z)\cap G$ is a lattice in $G$ (see \cite[\S 4.6]{PR}). 
	We apply Proposition \ref{p:upper2} to the action of $\Gamma$ on the space $Y=G/H$, where
	$H={\sf H}(\R)^0$, and the function $|\cdot|$ being given by
	a norm on $\hbox{M}_n(\mathbb{R})$.
	It remains to verify Conditions A1, A2, A3$^\prime$, A4.
	Condition A3$^\prime$ with $\zeta=m$ is immediate from \eqref{eq:e}, and 
	Condition A4 is straightforward to check for norms.
	
	When $H$ is a connected semisimple Lie group,
	the asymptotics of the volume of the norm balls $H_t=\{h\in H;\, \|h\|<t\}$ can be computed explicitly (see \cite[Sec.~7]{GW}, \cite{Mau}, \cite[Sec.~6]{GOS}): 
	$$
	m_H(H_t)\sim c\, t^{b}(\log t)^{b'-1}\quad\hbox{as $t\to \infty$},
	$$
	with $c>0$, $b\in \Q^+$ and $b'\in \N$. 
	The exponent $b$ is positive when $H$ is non-compact
	(equivalently, when ${\sf H}$ is isotropic over $\R$). This verifies Condition A1.

	We also note that the exponent $b$ can be computed explicitly in terms of the root data of $H$ (see \cite[eq.~(1.10)]{GOS}). We will need explicit estimates
	for $b$ in the next section. If we fix a Cartan subgroup $A$ of $H$ with a set of simple roots $\Delta$ and suppose the action of $A$ on $\mathbb{R}^n$ has a unique highest weight
	$\lambda$, then the exponent $b$ can computed as follows.
	We write 
	$$
	\lambda=\prod_{\alpha\in\Delta} \alpha^{m_\alpha}\quad\hbox{and}\quad
 	\rho^2=\prod_{\alpha\in\Delta} \alpha^{n_\alpha},
 	$$
	where $\rho^2$ denotes the product of positive roots. Then
	\begin{equation}
	\label{eq:b}
	b=\max\left\{\frac{n_\alpha}{m_\alpha}:\, \alpha\in\Delta \right\}.
	\end{equation}
	
	To verify Condition A2, we consider the unitary representation $\pi_Z$ of $G$ acting on
	$L_0^2(Z,\mu_Z)$, where $Z=\Gamma\backslash G$. Recall that $\sf H$ is assumed $\Q$-simple, and $H$ is assumed totally non-compact in $G$. It follows from Corollary \ref{cor:spectral gap} that 
	the restriction $\pi_Z|_H$
	is  an $L^p$-integrable representation of $H$.
	Hence, by Theorem \ref{th:mean}, the action of $H$ on 
	$L_0^2(Z,\mu_Z)$ satisfies a quantitative mean ergodic theorem,
	which verifies Condition A2 with some exponent $\theta>0$. 
	Hence, we conclude that Proposition \ref{p:upper2} applies in this setting,
	and yields that for $\kappa$ satisfying 
	\begin{equation}
	\label{eq:kkk}
	\kappa > \frac{m}{2\theta b},
	\end{equation}
	for almost all $g\in G$ and $\varepsilon\in (0,\varepsilon_0(g,\kappa))$, there exists $\gamma\in \Gamma$ such that
	$$
	\gamma^{-1}gH\in \widetilde{S}_\varepsilon H\quad\hbox{and}\quad \|\gamma\|<\varepsilon^{-\kappa}.
	$$
	Since $F$ is invariant under $H$, it follows from \eqref{eq:s} that $\|F(g^{-1} \gamma x_0)-\xi\|<\varepsilon$. Hence, we conclude that
	$x=\gamma x_0\in X^0\cap \mathbb{Z}^n$ gives a solution of 
	$$
	\|F(g^{-1} x)-\xi\|<\varepsilon\quad\hbox{and}\quad \|x\|<n\|x_0\|\, \varepsilon^{-\kappa}.
	$$
	Then for every $\kappa'>\kappa$ and sufficiently small $\varepsilon$, we obtain
	$$
	\|F(g^{-1} x)-\xi\|<\varepsilon\quad\hbox{and}\quad \|x\|<\varepsilon^{-\kappa'}.
	$$
	This completes the proof.
\end{proof}

\begin{proof}[Proof of Theorem \ref{mainthm2}]
	We recall that the stabiliser of the line $[x_0]$ in ${\sf G}$ is assumed to be
	a parabolic $\Q$-subgroup of ${\sf G}$.
	We refer to \cite[\S III.1-2]{bj} for properties of rational parabolic subgroups
	and basic facts from reduction theory.
	Without loss of generality, we may assume that ${\sf P}$
	is a standard rational parabolic subgroup.
	Namely, it is defined as follows.
	Let ${\sf T}$ be a maximal $\Q$-split torus of ${\sf G}$ and
	$\Delta=\Delta({\sf G},{\sf T})$ denote a set of (restricted) simple roots on ${\sf T}$ 
	(for the action of ${\sf T}$ on the Lie algebra of ${\sf G}$).
    For a proper subset $I$
	of $\Delta$, we have ${\sf P}={\sf N}Z_{\sf G}({\sf T}_I),$
	where ${\sf N}$ is the unipotent radical of ${\sf P}$, and 
	${\sf T}_I$ is a connected component of $\cap_{\alpha\in I} \hbox{ker}(\alpha)$.
	For a suitable choice of Cartan involution, there exists a Levi subgroup ${\sf L}$
	defined over $\Q$ which is invariant under this involution.
	We denote by $K$ the maximal compact subgroup of ${\sf G}(\R)$ corresponding to this involution.
	Let ${\sf S}$ be the split center of ${\sf L}$ over $\Q$ and ${\sf M}=\cap_{\chi\in X({\sf L})} \hbox{ker}(\chi^2)$.
	We set
	$$
	N={\sf N}(\R),\quad A={\sf S}(\R)^0, \quad M={\sf M}(\R).
	$$
	Then we have the decompositions
	\begin{equation}
	\label{eq:iwa}
	{\sf P}(\R)=N M A\quad\hbox{and}\quad  {\sf G}(\R)=N M A K.
	\end{equation}
	For $s\in \R$, we define
	$$
	A_s=\{a\in A:\, \alpha(a)>e^s\hbox{ for all $\alpha\in \Delta\backslash I$} \}.
	$$
	For a fixed compact $\Omega\subset  N M^0$ with non-empty interior, we define
	the Siegel sets
	$$
	\Sigma(s)=\Omega A_{s} K^0
	$$
	The Haar measure on ${\sf G}(\R)$ with respect to the decomposition \eqref{eq:iwa} is given by
	$$
	\int_{{\sf G}(\R)} f\, dm_{{\sf G}(\R)} =\int_{N\times M\times A\times K} f(nmak)\, dm_N(n)dm_M(m) \Delta_P(a^{-1})dm_A(a)dm_K(k)
	$$
	for $f\in C_c({\sf G}(\R))$, where 
	$m_*$ denotes the Haar measures on the factors, and 
	$$
	\Delta_P(a)=\det(\hbox{Ad}(a)|_{{\rm \tiny Lie}(N)})=
	\prod_{\alpha\in \Delta\backslash I} \alpha^{m_\alpha}
	$$
	for some positive integers $m_\alpha$.
	This implies that for some $\zeta>0$, we have the bound
	$$
	m_G(\Sigma(s))\gg e^{-\zeta s}.
	$$
	We set $\Gamma={\sf G}(\Z)\cap G$ and $Z=\Gamma\backslash G$,
	and consider the projections of the Siegel sets
	$$
	S_\delta=\Gamma\Sigma(-\log \delta)\subset Z.
	$$
	Since the set $\{\gamma\in \Gamma: \, \Sigma(s)\cap \gamma \Sigma(s)\}$ is finite,
	it follow that the projection map $G\to Z:g\mapsto \Gamma g$ 
	restricted to $\Sigma(s)$ is uniformly finite-to-one.
	This implies that 
	$$
	\mu_Z(S_\delta)\gg \delta^{\zeta}.
	$$
	We apply Proposition \ref{p:upper} to the action of $H={\sf H}(\R)^0$ on the space $Z$.
	Condition A3 for the sets $S_\delta$ has just been verified, and Conditions A1 and A2 have been
	verified in the proof of Theorem \ref{mainthm}. Hence, by Proposition \ref{p:upper},
	when $\kappa>\zeta/(2\theta b)$, for almost all $g\in G$ and $\delta\in (0,\delta_0(g,\kappa))$,
	there exists $h\in H$ such that
	$$
	\Gamma gh\in S_\delta\quad \hbox{and}\quad \|h\|<\delta^{-\kappa}.
	$$
	This implies that there exists $\gamma\in \Gamma$ such that
	$\gamma gh\in \Sigma(-\log \delta)$.
	
	For $g=nma\in P$, we have $g^{-1} x_0=e^{-\lambda(a)}x_0$
	for a character $\lambda$ of $A$.
	Since $[x_0]$ is stabilised by ${\sf P}$, it follows that it is 
	a highest weight vector with respect to the root system $\Delta$.
	In particular, $\lambda$ must be dominant, so that 
	$\lambda=\sum_{\alpha\in \Delta} n_\alpha \alpha$ for some $n_\alpha\ge 0$.
	Hence, it follows that there exists $c>0$ such that
	$$
	\|r^{-1}x_0\|\ll \delta^c\quad\hbox{for $r\in \Sigma(-\log \delta)$. }
	$$
	We conclude that the elements $\gamma$ and $h$ that we have constructed satisfy
	\begin{equation}
	\label{eq:n}
	\|h^{-1}g^{-1}\gamma^{-1}x_0\|\ll \delta^c.
	\end{equation}
	This implies that $x=\gamma^{-1}x_0\in X^0\cap \Z^n$ satisfies 

	\begin{equation}
	\label{eq:g1}
	\|x\|\le n^2 \|g\|\|h\| \|h^{-1}g^{-1}x\|\ll_g \delta^{-(\kappa-c)}.
	\end{equation}
	Let $d:=\min(\deg(F_1),\ldots,\deg(F_m))\ge 1$. Then 
	since the components $F_i$ of the map $F$ are homogeneous,
	it follows from \eqref{eq:n} that
	\begin{equation}
	\label{eq:g2}
	\|F_g(x)\|=\|F(h^{-1}g^{-1}\gamma^{-1}x_0)\|\ll_g \delta^{c d}.
	\end{equation} 
	Hence, combining \eqref{eq:g1} and \eqref{eq:g2}, we conclude that
	when $\kappa>\zeta/(2\theta b)$, for almost all $g\in G$, there exists $c_1,c_2>0$
	such that for all sufficiently small $\delta>0$, the system
	$$
	\|F_g(x)\|< c_1\, \delta^{c d},\quad \|x\|< c_2\, \delta^{-(\kappa-c)}
	$$
	has a solution $x\in X^0\cap \Z^n$. Equivalently, 
	when
	\begin{equation}
	\label{eq:kk2}
	\kappa'=\frac{\kappa-c}{c d}>\frac{\zeta-2\theta bc}{2\theta b c d},
	\end{equation}
	for almost all $g\in G$, $c_3=c_3(g)>0$, and $\varepsilon\in (0,\varepsilon_0(g,\kappa'))$,
	there exists $x\in X^0\cap \Z^n$ satisfying
	$$
	\|F_g(x)\|<\varepsilon\quad\hbox{and}\quad \|x\|<c_2\,\varepsilon^{-\kappa'}.
	$$
	Then for every $\kappa''>\kappa'$ and sufficiently small $\varepsilon$, we obtain
	$$
	\|F(g^{-1} x)-\xi\|<\varepsilon\quad\hbox{and}\quad \|x\|<\varepsilon^{-\kappa''}.
	$$
	This proves the theorem.
\end{proof}

\section{Proof of Theorems \ref{ternary}--\ref{th:gramm}}
\label{sec:proof2}

\begin{proof}[Proof of Theorem \ref{ternary}]
	We observe that this result fits the setting of Theorems \ref{mainthm}--\ref{mainthm2} with ${\sf G}=\hbox{SL}_3$ and ${\sf H}=\hbox{SO}_{Q_0}$
	for a fixed rational indefinite quadratic form $Q_0\in \mathcal{Q}({2,1};\ell)$. 
	
	Since  every $Q\in \mathcal{Q}({2,1};\ell)$ is of the form $Q(x)=Q_0(g^{-1}x)$ for some $g\in \hbox{SL}_3(\R)$, it is sufficient to show that for almost all
	$g\in \hbox{SL}_3(\R)$ and $\varepsilon\in (0,\varepsilon_0(g,\xi,\kappa))$,
	there exists $x\in \Z^3$ such that
	\begin{equation}\label{eq:q_0}
	|Q_0(g^{-1}x)-\xi|<\varepsilon\quad\hbox{and}\quad \|x\|<\varepsilon^{-\kappa}.
	\end{equation}
	We analyse the cases $\xi=0$ and $\xi\ne 0$ separately.
	In both cases, we consider the action of $H=\hbox{SO}_{Q_0}(\R)^0$ on the space $Z=\hbox{SL}_3(\Z)\backslash \hbox{SL}_3(\R)$.
	The corresponding unitary representation of $H$ on $L^2_0(Z)$ is well-known to be tempered (see Section \ref{sec:unitary}), 
	so that the mean ergodic theorem (Condition A2) holds with $\theta=1/2$.
	The volume growth of the norm balls $H_t$ in $H$
	can be computed using the formula \eqref{eq:b}.
	The Cartan subgroup of $H$ is conjugate to $\{c_t=\hbox{diag}(e^t,e^{-t},1):\, t\in \R\}$.
	There is a single positive root $\alpha:c_t\mapsto e^{t}$ and the highest weight 
	$\lambda:c_t\mapsto e^{t}$ is the same,
	so that the volume growth exponent is $b=1$. 
	This verifies Condition A1 with $b=1$.
	
	Suppose that $\xi\ne 0$. Then it is a regular value of the map $Q_0$.
	Hence, Theorem \ref{mainthm} implies that when $\kappa>1/(2\theta b)=1$,
	\eqref{eq:q_0} is solvable for almost all $g$ and all sufficiently small $\varepsilon>0$.
	
	Suppose that $\xi=0$. We apply Theorem \ref{mainthm2} 
	with ${\sf G}=\hbox{SL}_3$ and $x_0=e_1$. The maximal torus ${\sf T}$ of ${\sf G}$
	consists of diagonal matrices in $\sf G$. If we choose the simple roots
	$$
	\alpha_1:a\mapsto a_1a_2^{-1}\quad\hbox{and}\quad \alpha_2:a\mapsto a_2a_3^{-1}\quad\hbox{ with $a=\hbox{diag}(a_1,a_2,a_3)\in {\sf T}.$}
	$$
	Then $x_0$
	is the highest weight vector for the representation of ${\sf G}$ on $\mathbb{C}^3$.
	Its projective stabiliser is the parabolic subgroup 
	${\sf P}=\left(
	\begin{tabular}{ccc}
	$*$ & $*$ & $*$ \\
	$0$ & $*$ & $*$ \\
	$0$ & $*$ & $*$ 
	\end{tabular}
	\right)$,
	and we have the Iwasawa decomposition 
	$$
	{\sf P}(\mathbb{R})=NMA,
	$$ 
	where 
	\begin{align*}
	N&=\left(
	\begin{tabular}{ccc}
	$1$ & $*$ & $*$ \\
	$0$ & $1$ & $0$ \\
	$0$ & $0$ & $1$ 
	\end{tabular}
	\right),\quad
	M=\left(
	\begin{tabular}{ccc}
	$1$ & $0$ & $0$ \\
	$0$ & $*$ & $*$ \\
	$0$ & $*$ & $*$ 
	\end{tabular}
	\right),\\
	A&=\{a_t=\hbox{diag}(e^{2t/3},e^{-t/3},e^{-t/3}):\, t\in\R\}.
	\end{align*}
	Then 
	$$
	\alpha_1(a_t)=e^t,
	$$ so that the Siegel sets $\Sigma(s)$ are given by the 
	condition $\{t>s\}$. 
	We also compute that 
	$$
	\Delta_P(a_t)=e^{2t}.
	$$
	Hence, 
	$$
	m_G(\Sigma(s))\gg e^{-2s},
	$$
	and $\zeta=2$.
	We observe that 
	$$
	a_t\cdot x_0=e^{2t/3}x_0,
	$$
	so that 
	$$
	\|a^{-1}x_0\|\le e^{-2s/3}\quad\hbox{ for $a\in \Sigma(s)$,}
	$$
	and $c=2/3$.
	We apply Theorem \ref{mainthm2} to the polynomial map $Q_0$. We have $d=\deg(Q_0)=2$.
	According to Theorem \ref{mainthm2}, when 
	$$
	\kappa>\frac{\zeta-2\theta bc}{2\theta b c d}=(2-2/3)/(4/3)=1,
	$$
	the system \eqref{eq:q_0} with $\xi=0$ is solvable for almost all $g$ and all sufficiently small $\varepsilon>0$. This completes the proof.
\end{proof}

\begin{proof}[Proof of Theorem \ref{systems}]
	We recall (see \cite[Lemma~2.2]{S1}) that the pair $(Q,F)$ can be reduced to the canonical form 
	\begin{align*}
	Q_0(x_1,\ldots,x_n)&=Q_0'(x_1,\ldots,x_m)+Q_0''(x_{m+1},\ldots,x_n),\\
	F_0(x_1,\ldots,x_n)&=(x_1,\ldots,x_m),
	\end{align*}
	where $Q_0'$ is a non-degenerate form in $m$ variables,
	and $Q_0''$ is a diagonal non-degenerate indefinite form.
	More precisely,
	there exist $g_1\in \hbox{GL}_n(\R)$ and $g_2\in \hbox{GL}_m(\R)$ such that
	$$
	Q(x)=Q_0(g_1^{-1}x)\quad \hbox{and}\quad F(x)=g_2 F_0(g_1^{-1}x).
	$$
	Let ${\sf X}_0=\{Q_0=k\}$. It is easy to check directly that the map $F_0: {\sf X}_0(\R)\backslash\{0\}\to \R^m$ is a surjective submersion.
	The quadratic surface ${\sf X}_0(\R)\backslash\{0\}$ is connected unless it has signature $(1,n-1)$ or
	$(n-1,1)$. In the later case, ${\sf X}_0(\R)\backslash\{0\}$ has two connected components which
	are determined by the sign of one of the coordinates $x_i$.
	Hence, we conclude that for connected components $X_0^{(i)}$ of ${\sf X}_0(\R)\backslash\{0\}$,
	the map $F_0: X_0^{(i)}\to \R^m$ is also surjective submersion.
	Since the map $x\mapsto g_1x$
	defines a diffeomorphism of ${\sf X}_0(\R)\backslash\{0\}$ and ${\sf X}(\R)\backslash\{0\}$,
	it follows that $F: X^{(i)}\to \R^m$ is a surjective submersion
	on connected components $X^{(i)}$ of ${\sf X}(\R)\backslash\{0\}$. 
	
	We demonstrate that this result fits into the framework of Theorem \ref{mainthm}. 
	Let ${\sf G}=\hbox{SO}_Q$ and ${\sf H}$ be the stabilizer of the map $F$ in ${\sf G}$. Then 
	${\sf G}$ and ${\sf H}$ are algebraic $\Q$-groups. The group ${\sf H}$ preserves the linear subspace $V=\{F=0\}$. 
	Since $Q|_{V}$ is non-degenerate, it follows that ${\sf H}$
	preserves the direct-sum orthogonal decomposition $V\oplus V^\perp$.
	Moreover, since the map $F$ defines a coordinate system on $V^\perp$,
	it follows  that ${\sf H}$ acts trivially on $V^\perp$.
	Hence, we have an isomorphism ${\sf H}\simeq \hbox{SO}_{Q|_V}$.
	In particular, it follows that ${\sf H}$ is semisimple and $H={\sf H}(\R)^0\simeq \hbox{SO}(2,1)^0$.
	We set $G={\sf G}(\R)^0$ and $\Gamma={\sf G}(\Z)\cap G$.
	According to Section \ref{sec:quad},
	the representation of $H$ on $L^2_0(\Gamma\backslash G)$ is tempered,
	so that the mean ergodic theorem (Condition A2)
	holds for the action of $H$ on $Z=\Gamma\backslash G$ with the exponent $\theta=1/2$.
	The volume growth of the norm balls in $H\simeq \hbox{SO}(2,1)^0$ is the same as in the proof of Theorem \ref{ternary} and is given by $b=1$. 
	This verifies Condition A1 with $b=1$.
	We conclude that Theorem \ref{mainthm}
	applies to this setting and when $\kappa>m/(2\theta b)=m$, the system
	$$
	\|F_g(x)-\xi\|<\varepsilon,\quad \|x\|<\varepsilon^{-\kappa}\quad \hbox{ with $x\in {\sf X}(\Z)$}
	$$
	is solvable for almost all $g\in G$ and $\varepsilon\in (0,\varepsilon_0(g,\xi,\kappa))$.
	This proves the theorem.
\end{proof}

\begin{proof}[Proof of Theorem \ref{detmap}]
	We observe that the group ${\sf G}=\hbox{SL}_3\times \hbox{SL}_3$ 
	naturally acts on ${\sf X}$ by $x\mapsto g_1 xg_2^{-1}$ for $(g_1,g_2)\in {\sf G}$.
	The action 
	of ${\sf G}(\R)$ on 
	${\sf X}(\R)$ is  transitive.
	We denote by ${\sf H}\simeq \hbox{SL}_3$ the diagonal subgroup in ${\sf G}$.
	Then the polynomial map $F=(F_1,F_2)$ is invariant under ${\sf H}$.
	
	We set 
	\begin{align*}
	G&={\sf G}(\R)=G_1\times G_2=\hbox{SL}_3(\R)\times \hbox{SL}_3(\R),\\
	\Gamma &=\Gamma_1\times \Gamma_2=\hbox{SL}_3(\Z)\times \hbox{SL}_3(\Z),\\
	H&={\sf H}(\R)\simeq \hbox{SL}_3(\R).
	\end{align*}
	Here $H$ is the diagonal subgroup in $G$.
	We define a norm on $G$ using its representation on $\hbox{M}_3(\R)$:
	$x\mapsto g_1xg_2^{-1}$, $g=(g_1,g_2)\in G$. Namely,
	$$
	\|g\|=\max\left\{\|g_1xg_2^{-1}\|:\, \|x\|=1\right\}.
	$$
	The volume growth of the balls $H_t$ defined by this norm 
	can be computed using \eqref{eq:b}.
	A Cartan subgroup of $H\simeq \hbox{SL}_3(\R)$ consists of diagonal matrices
	with simple roots $\alpha_1:a\mapsto a_1a_2^{-1}$ and $\alpha_2:a\mapsto a_2a_3^{-1}=a_1a_2^2$. The product of positive roots is $\rho^2=\alpha_1^2\alpha_2^2$, and the highest weight is
	$\lambda:a\mapsto a_1a_3^{-1}$, so that $\lambda=\alpha_1\alpha_2=\rho$.
	Hence, according to \eqref{eq:b}, the volume growth exponent is given by 
	$b=2$. This verifies Condition A1 with $b=2$.
	
	Given $\xi=(\xi_1,\xi_2)\in \mathbb{R}^2$, we set
	$$
	x_\xi=\left(\begin{tabular}{ccc}
	 0 & 0 & $\ell$ \\
	 1 & 0 & $\xi_1$ \\
	 0 & 1 & $\xi_2$
	\end{tabular}
	\right)\in {\sf X}(\R).
	$$
	Then $F(x_\xi)=(\xi_1,\xi_2)$. In particular, it follows that the map $F:{\sf X}(\R)\to \R^2$ is onto and is a submersion.
		Let us choose $x_0\in \hbox{M}_3(\Z)$ such that $\det(x_0)=\ell$
		and $g_0\in \hbox{SL}_3(\R)$ such that $x_\xi=g_0^{-1} x_0$.
		Then the map $g\mapsto F(g^{-1} x_0)$, $g\in G_1$, is also a submersion at $g_0$.
		Hence, for sufficiently small $\varepsilon>0$, 
		we may define a family of shrinking bounded
		neighbourhoods $O_\varepsilon$ of $g_0$ such that 
		\begin{equation}
		\label{eq:ooo}
		\|F(g^{-1}x_0)-\xi\|<\varepsilon\quad\hbox{for $g\in O_\varepsilon$},
		\end{equation}
		and 
		\begin{equation}
		\label{eq:ooo_o}
		 \varepsilon^2\ll m_{G_1}(O_\varepsilon)\ll \varepsilon^2.
		\end{equation}
		We also fix a compact subset $\Omega_0$ in $\hbox{SL}_3(\R)$ with positive measure.
		Let 
		$$
		\widetilde{S}_\varepsilon=\{(gh,h):\, g\in O_\varepsilon, h\in \Omega_0 \}\subset G \quad
		\hbox{and}\quad S_\varepsilon=\Gamma \widetilde{S}_\varepsilon\subset Z.
		$$
		It is easy to check using \eqref{eq:ooo_o} that 
		$$
		m_G(\widetilde{S}_\varepsilon)\gg \varepsilon^2.
		$$
		In particular, Condition A3$'$ with $\zeta=2$ holds.
		Furthermore, taking $\Omega_0$ and $\varepsilon$ sufficiently small, 
		we may arrange that the factor map $G\to \Gamma\backslash G$
		is one-to-one on $\widetilde{S}_\varepsilon$. In particular, we also have 
		\begin{equation}
		\label{eq:ooo_o2}
		\mu_Z(S_\varepsilon)\gg \varepsilon^2.
		\end{equation}
		Since the map $F$ is ${\sf H}$-invariant, it follows from \eqref{eq:ooo} that 
		\begin{equation}
		\label{eq:ooo2}
		\|F(g_1^{-1}x_0 g_2)-\xi\|<\varepsilon\quad\hbox{for $(g_1,g_2)\in \widetilde{S}_\varepsilon$}.
		\end{equation}

	We consider the action of $H$ on the space $Z=\Gamma \backslash G$ and try to proceed as in the proof of Theorem \ref{mainthm}.
	Note however that in the present case the representation of $H$ on $L^2_0(Z,\mu_Z)$ is only $L^{4+}$-integrable. 
	Indeed $H$ is totally non-compact in $G$ so  the constants are the only $H$-invariant functions in $L^2(Z,\mu_Z)$, and $H\cong SL_3(\R)$ has integrability exponent $4^+$.  
	Then the mean ergodic theorem (Condition~A2) holds with $\theta=1/4$,
	and the previous argument would only imply existence of approximation when $\kappa>\frac{\zeta}{2\theta b}=2$.
	Nonetheless, we now turn to show that it is possible to modify the proof
	of Theorem \ref{mainthm} to produce the desired result and treat all $\kappa>1$.

	Let $\cH=L^2(Z,\mu_Z)$. We consider the unitary representation $\pi_Z$ of $G=G_1\times G_2$ on $\cH$.
	Let $\cH_i$, $i=1,2$, denote the closed $G$-invariant subspace of $\cH$ consisting of the $G_i$-invariant vectors. The unitary representation of $G$ on $(\cH_1+\cH_2)^\perp$
	has no non-zero vectors invariant under either $G_1$ or $G_2$.
	Then it is known that of the representation of the 
	diagonal subgroup $H$ acting on $(\cH_1+\cH_2)^\perp$ is tempered (see Section \ref{sec:unitary}).
	Hence, from Theorem \ref{th:mean} we obtain that for all $\eta>0$,  
	$$
	\|\pi_Z(\beta_t)f\|_{L^2(Z,\mu_Z)}\ll_\eta m_H(H_t)^{-1/2+\eta}\|f\|_{L^2(Z,\mu_Z)},\quad f\in (\cH_1+\cH_2)^\perp.
	$$
	Let $\cH_i^0$ denote the orthogonal complement of constant functions in $\cH_i$.
	The representation of $H$ on $\cH_i^0$ is $L^{4+\eta}$-integrable for all $\eta>0$
	(see Section \ref{sec:unitary}).
	In this case Theorem \ref{th:mean} gives a weaker bound: for all $\eta>0$,  
	$$
	\|\pi_Z(\beta_t)f\|_{L^2(Z,\mu_Z)}\ll_\eta m_H(H_t)^{-1/4+\eta}\|f\|_{L^2(Z,\mu_Z)},\quad f\in \cH_i^0,\;\; \hbox{ with $i=1,2$.}
	$$
	We have the orthogonal $G$-invariant decomposition 
	$$
	\cH=\left<1\right>\oplus \cH_1^0\oplus  \cH_2^0\oplus (\cH_1+\cH_2)^\perp.
	$$
    Let $f_\varepsilon$ denote the characteristic function of 
	 the subset $S_\varepsilon$ of $Z$.
	Combining the above estimates, we obtain that for all $\eta>0$,
	\begin{align*}
	&\norm{\pi_Z(\beta_t)f_\varepsilon-\int_Z f_\varepsilon\, d\mu_{Z}}_{L^2(Z,\mu_Z)}\\
	\ll_\eta &\; m_H(H_t)^{-1/2 +\eta}\, \norm{f_\varepsilon}_{L^2(Z,\mu_Z)}
	\\
	&+
	m_H(H_t)^{-1/4 +\eta}\, \left(\norm{f^{(1)}_\varepsilon}_{L^2(Z,\mu_Z)}+\norm{f^{(2)}_\varepsilon}_{L^2(Z,\mu_Z)}\right),
	\end{align*}
	where $f^{(i)}_\varepsilon$, $i=1,2$, denote the orthogonal projection of $f_\varepsilon$ on the subspaces $\cH_i$. We observe that these projections can be computed explicitly.
	We write 
	$$
	Z=Z_1\times Z_2\quad\hbox{with $Z_i\simeq \Gamma_i\backslash G_i$.}
	$$
	Then $\mu_Z=\mu_{Z_1}\otimes \mu_{Z_2}$, where $\mu_{Z_i}$ denote 
	the invariant probability measures on the factors $Z_i$.
	For $(z_1,z_2)\in Z$,
	\begin{align*}
	f^{(1)}_\varepsilon(z_1,z_2)=\int_{Z_1} f_\varepsilon(z,z_2)\, d\mu_{Z_1}(z)
	=\mu_{Z_1}(\{z\in Z_1:\, (z,z_2)\in S_\varepsilon\}).
	\end{align*}
	Writing $z=\Gamma_1 g$ and $z_2=\Gamma_2 g_2$, we observe that
	$$
	\{(g,g_2)\in G_1\times G_2:\, \Gamma (g,g_2)\in S_\varepsilon\}\subset \Gamma
	\{(g,g_2)\in G_1\times G_2:\, g g_2^{-1}\in O_\varepsilon, g_2\in \Omega_0\}.
	$$
	In particular, we obtain
	\begin{align*}
	\mu_{Z_1}(\{z\in Z_1:\, (z,\Gamma_2 g_2)\in S_\varepsilon\})
	&\ll m_{G_1}(\{g\in G_1:\, g g_2^{-1}\in O_\varepsilon, g_2\in \Omega_0\})\\
	&\le m_{G_1}(O_\varepsilon),
	\end{align*}
	and
	\begin{align*}
	\|f^{(1)}_\varepsilon\|_{L^2(Z,\mu_Z)}&=\left(\int_{Z_2} \mu_{Z_1}(\{z\in Z_1:\, (z,z_2)\in S_\varepsilon\})^2\,d\mu_{Z_2}(z_2)\right)^{1/2}\\
		&\ll m_{G_1}(O_\varepsilon) m_{G_2}(\Omega_0)^{1/2} \ll m_{G_1}(O_\varepsilon).
	\end{align*}
	The $L^2$-norm of the projection $f^{(2)}_\varepsilon$ can be bounded similarly.
	Using these estimates, we conclude that for any $\eta>0$,
	\begin{align*}
	\norm{\pi_Z(\beta_t)f_\varepsilon-\int_Z f_\varepsilon\, d\mu_{Z}}_{L^2(Z,\mu_Z)}\ll_\eta &\, m_H(H_t)^{-1/2 +\eta}\, m_{G_1}(O_\varepsilon)^{1/2}
	\\
	&+
	m_H(H_t)^{-1/4 +\eta}\, m_{G_1}(O_\varepsilon).
	\end{align*}
	Next, we can carry our the argument exactly as in the proof of Proposition \ref{p:upper}.
	Using the estimates \eqref{eq:ooo_o} and \eqref{eq:ooo_o2}, we derive (as in \eqref{eq:zzz}) the bound
	\begin{align*}
	\mu_Z(Z(t,\varepsilon))^{1/2}\ll_\eta  
	&\, \mu_Z(S_\varepsilon)^{-1} \left(m_H(H_t)^{-1/2 +\eta}\, m_{G_1}(O_\varepsilon)^{1/2}
	+
	m_H(H_t)^{-1/4 +\eta}\, m_{G_1}(O_\varepsilon) \right)\\
	\ll  
	&\, m_H(H_t)^{-1/2+\eta}\varepsilon^{-1}+
	m_H(H_t)^{-1/4+\eta}.
	\end{align*}
	Since the volume growth exponent of $H_t$'s is given by $b=2$,
	the proof of Proposition \ref{p:upper} works for $\varepsilon=t^{-1/\kappa}$ with any $\kappa>1$.
\end{proof}

\begin{proof}[Proof of Theorem \ref{th:gramm}]
	We observe that by choosing an integral basis of $\mathbb{R}^3$, the space of real unimodular frame in $\mathbb{R}^3$
	can be identified with the group $G=\hbox{SL}_3(\mathbb{R})$. Then 
	the subset of integral frames is given by the lattice $\Gamma=\hbox{SL}_3(\mathbb{Z})$.
	Let 
	$$
	Q_0(x)=-x_1^2-x_2^2+x_3\quad\hbox{and}\quad J=\hbox{diag}(-1,-1,1).
	$$
	The Gram matrix map for $Q_0$ is given by
	$$
	F_{Q_0}(u)={}^tu J u\quad \hbox{ for $u\in G$.}
	$$
	We claim that the map $F_{Q_0}: G\to \mathcal{Q}(2,1;1)$ is a submersion for all $g\in G$.
	Indeed, since $u\mapsto g_1u$ with $g_1\in G$ defines a diffeomorphism of $G$, 
	it is sufficient to
	verify the claim when $g=e$, which can be checked by a direct computation. 
	Given $\xi\in\hbox{Sym}(2,1;1)$, we take $u_0\in G$ such that $F_{Q_0}(u_0^{-1})=\xi$.
	Since the map $u\mapsto F_{Q_0}(u^{-1})$ is also a submersion, one can construct a collection of shrinking subsets 
	$\tilde{S}_\varepsilon$, $\varepsilon\in(0,\varepsilon_0)$, such that
	\begin{equation}
	\label{eq:v0}
	\|F_{Q_0}(s^{-1})-\xi\|<\varepsilon\quad\hbox{for all $s\in \tilde{S}_\varepsilon$,}
	\end{equation}
	and
	\begin{equation}
	\label{eq:vvv}
	m_G(\tilde{S}_\varepsilon)\gg \varepsilon^\zeta\quad\hbox{for all $\varepsilon\in (0,\varepsilon_0)$,}
	\end{equation}
	where $\zeta=\dim (\hbox{Sym}(2,1;1))=5$.
	Let $H=\hbox{SO}_{Q_0}(\mathbb{R})^0$.
	We apply Proposition \ref{p:upper2} to the action of $\Gamma$ on the space $Y=G/H$.
	In this case $|\cdot|$ is given by the $\max$-norm on $\hbox{M}_3(\mathbb{R})$,
	and $H_t=\{h\in H:\, \|h\|<t\}$ is as in the proof of Theorem \ref{ternary},
	so that the volume growth exponent is $b=1$.
	This verifies Condition A1 with $b=1$.
	Condition A3$'$ with $\zeta=5$ is verified by \eqref{eq:vvv}.
	As we discussed in Section \ref{sec:unitary}, the representation of $H$ on $L_0^2(\Gamma\backslash G)$
	is tempered. Hence, the mean ergodic theorem (Condition A2) holds with $\theta=1/2$.
	We conclude from Proposition \ref{p:upper2} that given any $\kappa>\frac{\zeta}{2\theta b}=5$, for almost all $g\in G$
	and $\varepsilon\in (0,\varepsilon_0(g,\kappa))$, there exists $\gamma\in\Gamma$ satisfying
	$$
	\gamma^{-1}gH\in \tilde S_\varepsilon H\quad\hbox{and}\quad \|\gamma\|<\varepsilon^{-\kappa}.
	$$
	Since ${}^thJh=J$ for $h\in H$, it follows from \eqref{eq:v0} that 
	$$
	\|{}^t(g^{-1}\gamma)J(g^{-1}\gamma)-\xi\|<\varepsilon.
	$$
	Every $Q\in \mathcal{Q}(2,1;1)$ can be represented as $Q(x)=Q_0(g^{-1}x)$ for some $g\in G$.
	Then $Q$ is represented by the matrix
	 $J_Q={}^tg^{-1} Jg^{-1}$, and its Gram matrix map is
	$$
	F_Q(u)={}^t u J_Q u={}^t(g^{-1}u)J(g^{-1}u)
	\quad \hbox{ for $u\in G$.}
	$$
	Hence, we conclude that the Theorem \ref{th:gramm} holds for almost all
	$Q\in \mathcal{Q}(2,1;1)$.
\end{proof}

\section{More on the Pigeonhole Heuristics}
\label{sec:naive}

In this section, we prove Proposition \ref{th:naive} 
which constructs explicit family of polynomial maps
that fail to satisfy the Pigeonhole Heuristics.
Recall that 
$$
{\sf X}=\{x^{2}_1 + \dots + x^{2}_{n-1} - x^{2}_n=1\}
$$
with $n\ge 4$, and 
\begin{align*}
F_{{\alpha}}(x) &=x_n - \sum_{i = 1}^{s}\alpha_i x_i,
\quad\hbox{${\alpha}=(\alpha_1,\ldots,\alpha_s)\in \mathbb{R}^s$.}
\end{align*}
with $1\le s\le n-1$. We start the proof of Proposition \ref{th:naive} with a lemma:
	
\begin{Lemma}\label{l:dioph}
Let $\sigma_s>s-2$ when $s\ge 2$ and $\sigma_1> -1/2$. 
Then for every $\xi\in \mathbb{R}$ and almost all ${\alpha}=(\alpha_1,\ldots,\alpha_s)\in \mathbb{R}^s$, there exists $c=c(\xi,{\alpha})>0$ such that 
$$
\left|z-\left(\sum_{i=1}^s \alpha_i x_i+\xi\right)^2\right|\ge \frac{c}{\|x\|^{\sigma_s}}
$$
for all $z\in \mathbb{Z}$ and ${x}=(x_1,\ldots,x_s)\in\mathbb{Z}^s\backslash \{0\}$
satisfying $z\ge \|{x}\|^2-1$.
\end{Lemma}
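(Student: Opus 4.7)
My plan is a Borel--Cantelli argument. Fix a compact box $K\subset\mathbb R^s$, and for each triple $(x,z,c_0)$ with $x\in\mathbb Z^s\setminus\{0\}$, $z\in\mathbb Z$, $z\ge\|x\|^2-1$, and $c_0>0$, define the bad set
\[
B_{x,z}(c_0)=\{\alpha\in K:\,|z-(\alpha\cdot x+\xi)^2|<c_0/\|x\|^{\sigma_s}\}.
\]
If one shows $\sum_{x,z}m(B_{x,z}(c_0))<\infty$, then Borel--Cantelli yields that for a.e.\ $\alpha\in K$ only finitely many pairs $(x,z)$ are bad; choosing $c(\alpha,\xi)<c_0$ smaller than the minimum of $|z-(\alpha\cdot x+\xi)^2|\,\|x\|^{\sigma_s}$ on this finite exceptional family then gives the lemma on $K$, and exhausting $\mathbb R^s$ by an increasing sequence of such boxes concludes.

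The measure bound on an individual $B_{x,z}(c_0)$ comes from the factorisation $z-y^2=(\sqrt z-y)(\sqrt z+y)$, with $y=\alpha\cdot x+\xi$. The hypothesis $z\ge\|x\|^2-1$ gives $\sqrt z\gtrsim\|x\|$, while on compact $K$ one has $|y|\lesssim\|x\|$, so whenever $\alpha\in B_{x,z}(c_0)$ the factor $\sqrt z+|y|$ is of order $\|x\|$ and the other factor $|y\mp\sqrt z|$ is at most $c_0/\|x\|^{\sigma_s+1}$. Geometrically this pins $\alpha$ to a slab perpendicular to $x/\|x\|$ of thickness $\lesssim c_0/\|x\|^{\sigma_s+2}$, so $m(B_{x,z}(c_0)\cap K)\lesssim c_0/\|x\|^{\sigma_s+2}$.

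The $z$-summation is the delicate step. Only $z\in\mathbb Z\cap[\|x\|^2-1,C\|x\|^2]$ contribute, and the slabs for distinct $z$ are parallel and, for $\|x\|$ large, pairwise disjoint. Passing to the variable $u=y^2$ (bad $u$-set = union of intervals of radius $c_0/\|x\|^{\sigma_s}$ around the admissible integers) and pulling back via the Jacobians $|du/dy|=2|y|\asymp\|x\|$ and $|dy/d\alpha_\parallel|=\|x\|$, the $\alpha$-measure of $\bigcup_z B_{x,z}(c_0)\cap K$ acquires two powers of $\|x\|^{-1}$ relative to the naive slab-by-slab count; together with the dyadic shell count $\#\{x:\|x\|\sim N\}\asymp N^{s-1}$, the Borel--Cantelli sum then reduces to $c_0\sum_N N^{s-1}/N^{\sigma_s+2}$, which converges exactly when $\sigma_s>s-2$ (respectively $\sigma_1>-1/2$ in the one-dimensional case).

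The main obstacle is precisely this refinement: the naive slab-by-slab estimate gives only $\sum_z m(B_{x,z}(c_0)\cap K)\lesssim c_0/\|x\|^{\sigma_s}$ and would force the restrictive $\sigma_s>s$; extracting the two extra powers of $\|x\|$ from the square-map Jacobian and from the $z$-window $[\|x\|^2-1,C\|x\|^2]$ is the technical heart of the argument. In the one-dimensional case where $\sigma_1$ may be negative, one also splits according to whether $|\alpha|<1$ (in which regime $y^2<|x|^2-1$ for large $|x|$ makes $|z-y^2|\gtrsim(1-\alpha^2)|x|^2$, trivially verifying the bound for any $c$), or $|\alpha|\ge 1$, handled by the Borel--Cantelli scheme above.
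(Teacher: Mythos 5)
Your approach follows the paper's Borel--Cantelli scheme, and your per-$(x,z)$ slab estimate $m(B_{x,z}(c_0)\cap K)\ll c_0/\|x\|^{\sigma_s+2}$ is correct (it is the paper's \eqref{eq:vol} combined with $\sqrt z\gg\|x\|$). But the proposed $z$-summation refinement via the square-map Jacobian does not work, and this is exactly the step that matters. Tracking your own chain of changes of variables: the bad $u$-set has measure of order $\|x\|^2\cdot c_0/\|x\|^{\sigma_s}$ (there are about $\|x\|^2$ admissible integers in $[\|x\|^2-1,C\|x\|^2]$, and the intervals around them are disjoint for $\|x\|$ large), and dividing by $|du/dy|\gg\|x\|$ and then by $|dy/d\alpha_\parallel|=\|x\|$ returns $c_0/\|x\|^{\sigma_s}$ --- exactly the naive $\sum_z m(B_{x,z}(c_0))$, not the hoped-for $c_0/\|x\|^{\sigma_s+2}$. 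The two Jacobians are not an extra gain: they are precisely what convert the per-$z$ radius $c_0/\|x\|^{\sigma_s}$ in $u$ into the per-slab thickness $c_0/\|x\|^{\sigma_s+2}$ in $\alpha$, so invoking them again on the summed quantity is a double count, not a refinement. Consequently the argument you outline establishes convergence only for $\sigma_s>s$, and the displayed sum $c_0\sum_N N^{s-1}/N^{\sigma_s+2}$ is not what it actually produces.

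The case $s=1$ with $\sigma_1\in(-1/2,0)$ --- the range the paper explicitly reduces to --- also needs more than you give it. There the target lower bound $c/\|x\|^{\sigma_1}=c|x_1|^{|\sigma_1|}$ grows with $|x_1|$; yet for any fixed $\alpha_1$ with $|\alpha_1|>1$, taking $z$ to be the nearest integer to $(\alpha_1 x_1+\xi)^2$ gives an admissible pair (since $z\ge x_1^2-1$ for large $|x_1|$) with $|z-(\alpha_1 x_1+\xi)^2|\le 1/2$, and the per-$x_1$ bad measure in your scheme is of order $c_0|x_1|^{|\sigma_1|}$, which diverges. Saying this regime is "handled by the Borel--Cantelli scheme above" therefore does not address it; your dichotomy $|\alpha|<1$ versus $|\alpha|\ge 1$ is a good instinct, but the $|\alpha|\ge 1$ branch is genuinely not closed by what you wrote. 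For comparison, the paper handles the $z$-sum quite differently --- not by a Jacobian argument but by asserting a count, namely that for each fixed $x$ at most two values of $z$ (for $s\ge 2$, respectively $O(|x_1|^{-\sigma_1})$ values for $s=1$) have $A(z,x)\ne\emptyset$. Your proposal does not engage with any such count, and without a correct replacement for it the $z$-summation, which is the content of the lemma, remains open.
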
 

\begin{proof}
Excluding a subset $(\alpha_1,\ldots,\alpha_s)\in\mathbb{R}^s$ of measure zero,
we may assume that 
$$
z-\left(\sum_{i=1}^s \alpha_i x_i+\xi\right)^2\ne 0\quad
\hbox{for all $z\in \mathbb{Z}$ and $(x_1,\ldots,x_s)\in\mathbb{Z}^s\backslash \{0\}$.}
$$
Indeed, when $(x_1,\ldots,x_s)\ne (0,\ldots,0)$,
this inequality defines a complement of at most two hyperplanes in $\mathbb{R}^s$. Hence, it is sufficient to exclude a union of countably many hyperplanes.

Let $\Omega$ be a compact domain in $\mathbb{R}^s$.
For $z\in\mathbb{Z}$ and ${x}\in\mathbb{Z}^s\backslash \{0\}$, we define
$$
A(z,{x})=\left\{{\alpha}\in\Omega:\, \left|z-\left(\sum_{i=1}^s \alpha_i x_i+\xi\right)^2\right|< \frac{1}{\|x\|^{\sigma_s}}
\right\}.
$$
We claim that almost all $(\alpha_1,\ldots,\alpha_s)\in \Omega$
belong only to finitely many of the sets 
$A(z,{x})$ with $z\ge \|{x}\|^2-1$. Once this claim is proved, in view of the previous paragraph,
it will follow that for almost all $(\alpha_1,\ldots,\alpha_s)\in \Omega$,
$$
\min_{z,{x}:z\ge \|{x}\|^2-1} \left|z-\left(\sum_{i=1}^s \alpha_i x_i+\xi\right)^2\right| \|x\|^{\sigma_s}>0.
$$
This immediately implies the lemma for almost every $(\alpha_1,\ldots,\alpha_s)\in \Omega$.
Exhausting $\mathbb{R}^s$ be compact domains, we deduce that
the lemma also holds for almost every $(\alpha_1,\ldots,\alpha_s)\in \mathbb{R}^s$.

Now we proceed with the proof of the claim. We have to show that
the $\limsup$ of the sets
$A(z,{x})$ with $z\in\mathbb{Z}$ and ${x}\in\mathbb{Z}^s\backslash \{0\}$ satisfying $z\ge \|{x}\|^2-1$ has measure zero. 
By the Borel--Cantelli Lemma,
it is sufficient to show that the sum of the volumes of these sets is finite.
We note that given ${x}$, we have $A(z,{x})\ne \emptyset$ for at most two $z$'s.
Hence, it is sufficient to consider ${x}$ with $\|{x}\|\ge 2$.
We use that 
\begin{equation}
\label{eq:vol}
\hbox{vol}(A(z,{x}))\ll \frac{1}{\|{x}\|^{\sigma_s+1}\sqrt{z}}.
\end{equation}
To prove \eqref{eq:vol}, we suppose, for instance, that $|x_1|=\|{x}\|$.
The set $A(z,{x})$ is defined by the inequalities
$$
\sqrt{z-\frac{1}{\|{x}\|^{\sigma_s}}}<\left|\sum_{i=1}^s \alpha_i x_i+\xi\right|<\sqrt{z+\frac{1}{\|{x}\|^{\sigma_s}}}.
$$
This in particular implies that $\alpha_1$ is contained in a pair of
intervals (depending on $\alpha_2,\ldots,\alpha_s$) of length
$$
\frac{1}{|x_1|}\left(\sqrt{z+\frac{1}{\|{x}\|^{\sigma_s}}}-\sqrt{z-\frac{1}{\|{x}\|^{\sigma_s}}}\right) 
\ll
\frac{1}{\|x\|^{\sigma_s+1}\sqrt{z}}.
$$
This bound implies \eqref{eq:vol}.

Suppose that $s=1$. In this case, we have to verify the claim for $\sigma_1>-1/2$,
and without loss of generality we may assume that $\sigma_1<0$.
For every $x_1$, there are at most $O(|x_1|^{-\sigma_s})$
values of $z$ such that $\hbox{vol}(A(z,x_1))\ne \emptyset$.
Hence, we obtain that
$$
\sum_{z,x_1: |x_1|\ge 2, z\ge x_1^2-1} \hbox{vol}(A(z,x_1))\ll
\sum_{x_1\ne 0} \frac{1}{|x_1|^{2\sigma_s+2}} <\infty
$$
since $\sigma_s>-1/2$.

Suppose that $s\ge 2$. In this case, $\sigma_s>0$.
For every ${x}\ne 0$, there are at most two
values of $z$ such that $\hbox{vol}(A(z,{x}))\ne \emptyset$.
As above we obtain
$$
\sum_{z,{x}:\|{x}\|\ge 2, z\ge \|{x}\|^2-1} \hbox{vol}(A(z,{x}))\ll
\sum_{{x}\ne 0} \frac{1}{\|{x}\|^{\sigma_s+2}} <\infty
$$
since $\sigma_s>s-2$.
\end{proof}

\begin{proof}[Proof of Proposition \ref{th:naive}]
We observe that since $\xi\notin \mathbb{Z}$,
the system \eqref{eq:c1} does not have solutions with $(x_1,\ldots,x_s)=(0,\ldots, 0)$
when $\varepsilon$ is sufficiently small.
Hence, it is sufficient to consider solutions satisfying 
$(x_1,\ldots,x_s)\ne (0,\ldots, 0)$.
We shall show that the theorem holds on the set of ${\alpha}$'s of full measure provided by Lemma \ref{l:dioph}. Suppose that 	
$x\in \mathbb{Z}^n$ satisfies \eqref{eq:c1}. Then
$$
x_n=\sum_{i=1}^s \alpha_ix_i+\xi+O(\varepsilon),
$$
and it follows that
$$
x_1^2+\cdots +x_{n-1}^2-\left(\sum_{i=1}^s \alpha_ix_i+\xi\right)^2=1+O(\varepsilon^{1-\kappa_s}).
$$ 
Let $z=x_1^2+\cdots +x_{n-1}^2-1$. Then $z\ge \|(x_1,\ldots,x_s)\|^2-1$, and we have
$$
\left|z-\left(\sum_{i=1}^s \alpha_ix_i+\xi\right)^2\right|\ll \varepsilon^{1-\kappa_s}.
$$
On the other hand,  Lemma \ref{l:dioph} implies the lower bound
$$
\left|z-\left(\sum_{i=1}^s \alpha_ix_i+\xi\right)^2\right|\gg \varepsilon^{\sigma_s\kappa_s}.
$$
Hence, if integral solutions of \eqref{eq:c1} exist for all sufficiently small
$\varepsilon>0$, we must have $1-\kappa_s\le \sigma_s \kappa_s$.
Hence, $\kappa_s\ge 1/(\sigma_s+1)$. We conclude that $\kappa_1\ge 2$,
and  $\kappa_s\ge 1/(s-1)$ when $s>1$. This proves the theorem.
\end{proof}

\end{document}